\newtheorem{theorem}{Theorem}
\newtheorem{lemma}{Lemma}
\newtheorem{proof}{Proof}
\newtheorem{remark}{Remark}
\newtheorem{example}{Example}
\newcommand{\mblue}{\color{black}}
\newcommand{\ourdesi}{\flat}
\title{On the turnpike phenomenon for optimal
boundary control problems with hyperbolic systems}
\author{Martin Gugat, Falk M. Hante
\thanks{
Friedrich--Alexander--Universit\"at Erlangen--N\"urnberg,
Department Mathematik,
Cauerstr.~11, 91058 Erlangen,
Germany (\texttt{martin.gugat@fau.de, falk.hante@fau.de})}
}
\date{\today}
\DeclareSIUnit{\length}{\meter}
\DeclareSIUnit{\time}{\second}
\DeclareSIUnit{\speed}{\meter\per\second}
\DeclareSIUnit{\density}{\kilogram\per\cubic\meter}
\DeclareSIUnit{\pressure}{\kilogram\per\meter\per\square\second}
\DeclareSIUnit{\flux}{\kilogram\per\square\meter\per\second}
\DeclareSIUnit{\roughness}{}
\DeclareSIUnit{\friction}{\per\meter}
\DeclareSIUnit{\gravity}{\meter\per\square\second}
\begin{document}

\maketitle


{\bf Abstract:}
%
We study
problems of
optimal boundary control with systems governed by
linear hyperbolic partial differential equations.
The objective function is quadratic
and given by an integral
over the finite time interval $(0,\, T)$
that depends on the boundary traces of the solution.
If the time horizon $T$ is sufficiently large,
the solution of
the dynamic  optimal boundary control problem
can be approximated by the solution of a steady state optimization problem.
We show that  for $T\rightarrow\infty$  the approximation error converges
to zero in the sense of the norm in
 $L^2(0,\,1)$ with the rate $1/T$,
if the time interval $(0,\, T)$
is transformed to the fixed interval $(0,\, 1)$.
%
Moreover, we show that also for
optimal boundary control problems with
integer constraints for the controls
the  turnpike phenomenon
occurs.
In this case the steady state optimization problem
also has the integer constraints.
If $T$ is sufficiently large, the integer part of
each solution of
the dynamic optimal boundary control problem with
integer constraints is equal to
the integer part of a solution of the static problem.
A numerical verification  is given
for a control problem
in gas pipeline operations.


{\bf Keywords:}
Hyperbolic system, boundary control, optimal control, turnpike,
integer constraints


AMS:
35L04, 49K20, 90C46


\section{Introduction} \label{intro}
Boundary control problems for
systems governed by hyperbolic partial differential
equations
(pdes) appear in many applications,
for example  water  or gas  transportation systems,
see  e.g.
\cite{BastinCoron2016}.
 Applications of this type
give rise to
optimal boundary control problems,
where an objective function models
the aims of the control design.
%
%
%
In these control problems, it makes sense to
consider
finite time horizons.
An overview of the quadratic optimal control of hyperbolic partial differential equations
is given in \cite{lasiecka}.
In this paper we are interested in
results about the structure of the optimal boundary controls
in the spirit of the turnpike theory.
Since
the evolution of the state in time is governed
by the hyperbolic pde,
we call the corresponding optimal controls
the dynamic optimal controls.
The
turnpike phenomenon can be summarized in the statement
that
{\em in large time} intervals,  {\em the optimal state,  control and adjoint vector
remain most of the time close to an optimal steady-state}
(see \cite{trezuzhang}).
This means that
in order to get an idea of the
dynamic optimal controls,
it make sense to look
at the solution of
a certain static boundary control
problem first,
where all time derivatives are set to zero.
This static control problem determines optimal static states.
Let us call the corresponding optimal control the
static optimal control.
Our aim is
to give a bound for the difference between
the  static optimal control (that is independent of time)
and the dynamic optimal control.

Turnpike theory has originally been discussed in economics,
see \cite{samuelson}. Turnpike properties for discrete--time optimal
control problems
have been studied in \cite{dammgruene}, \cite{gruene}.
Recently there has been some interest in
the study of the turnpike phenomenon for
infinite dimensional optimal control problems,
in particular with systems governed by pdes,
see for example \cite{porretta} for the linear case,
\cite{porretta2} for the parabolic semilinear case
and \cite{trezushape} for optimal shape design with the heat equation.
Problems with infinite-dimensional control systems
have also been studied in \cite{zaslavski}.
The results can be applied to control systems with
distributed control.
In this paper, we consider boundary control systems
that are governed by hyperbolic   pdes.
Optimal Neumann boundary control problems
for systems   governed by the wave
equation have been studied in \cite{gutrezu}.
A review of turnpike results for wave equations is given in \cite{Zuazua2017}.

{ 
In \cite{dammgruene}, \cite{gruene} and the  recent contributions
\cite{faulwasser}, \cite{trelat}
on turnpike theory,
dissipativity plays an essential role.
In \cite{faulwasser}, system states in a
finite-dimensional space are considered.
In \cite{trelat} infinite-dimensional states
spaces are considered and the
control acts as a distributed control in
the partial differential equation
in such a way that mild solutions are
well-defined.
In \cite{trelat}, both
integral- and measure--turnpike properties
are considered.
In this paper, we consider
integral--turnpike properties.
Here we mention that the optimal control problems that we consider in this paper
(i.e. (\ref{ocplambda}), (\ref{ocplambdaswitch}))
satisfy a  dissipation inequality
as defined in  \cite{trelat}
if there exists a number
$\Xi_0>0$ such that
the supply rate function $\omega(u)$
as a function of the control $u$
satisfies the inequality
$\int_0^t \omega(u(\tau))\, d\tau
\geq \Xi_0 \|u\|_{(L^2(0,\, t))^2}^2$.
Then we can find a number $\Xi_1>0$ such that
for a state $r\in (L^2(0, L))^2$
the storage function $S( r ) =  \Xi_1\,  \| r \|_{(L^2(0, L))^2}^2$
satisfies a dissipation inequality.

%
}

In this paper we study optimal Dirichlet boundary control problems for systems
that are governed by linear  $2\times 2$ hyperbolic pdes.
%
A similar problem of optimal boundary control is studied in \cite{hasan},
but the turnpike phenomenon is not considered. Our motivation for this setting
is to obtain structural insights for the optimal control of gas flow in pipelines.
Also the linearized Saint-Venant equations,  that can be used as a  model for the
flow of water through channels, have this form, see \cite{onlyapunov}.
In these applications, also binary decisions are important to model
for example the decision to open or close a certain valve or to switch on or off a
control device such as a compressor \cite{isiam}.
This motivates the study of optimal boundary control problems with integer
constraints.

This paper has the following structure. In Section~\ref{mainresults} we
define the system dynamics and present our results for optimal boundary
control problems for unconstrained and integer-constrained cases.
In Section~\ref{derivation} a proof is given for the main result concerning
the unconstrained case. In Section~\ref{derivationint} we prove our result
for the integer-constrained case. In Section~\ref{applicationsection} we
discuss an application of our results to optimized operation of gas pipelines
and provide a numerical verification. In Section~\ref{conclusions}
we present conclusions.

\section{Problem definition and main results} \label{mainresults}

\subsection{Hyperbolic system dynamics}
\label{hyperbolic}
The aim of this contribution is
to study the turnpike phenomenon for
systems that are governed by hyperbolic
pdes.
We consider a $2\times 2 $ system in diagonal form.
Let a length $L>0$ and a time interval $[0,T]$ be given.
Let  $d_-$ and $d_+$ be real-valued
continuously differentiable functions
that are defined on the space-interval $[0,\,L]$ such that
for all $x\in [0,\,L]$ the inequality
$d_-(x) < 0 < d_+(x)$ holds.
Define the ($x$-dependent) diagonal matrices
\[
D(x) =
\left(
\begin{array}{cc}
d_+(x)&  0
\\
0   &  d_-(x)
\end{array}
\right),\;\;
D'(x) =
\left(
\begin{array}{cc}
d_+'(x) &  0
\\
0   &  d_-'(x)
\end{array}
\right)
.
\]
For all $x\in [0,\, L]$, let $M(x)$ denote a
$2\times 2$ matrix
that depends continuously on $x$.
Let $\eta_0\leq 0$ be  a real number.
For real numbers $\mu_+$, $\mu_-$ define
the matrix
\begin{equation}
\label{edefinition2018}
E(x) =
\left(
\begin{array}{cc}
\exp(- \mu_+ \,x)
&  0
\\
0  &  \exp( \mu_- \, \,x)
\end{array}
\right).
\end{equation}
Assume that
there exist $\mu_+>0$, $\mu_->0$
and $\nu_a < 0$
such that  for all $x\in [0,\, L]$
\begin{equation}
\label{corondelta}
\sup_{v:\, v^\top E(x)\, v=1}
v^\top \left[
\frac{d}{dx}\left(E(x) D(x) \right)
- |\eta_0| \left( E(x)  M(x) + M(x)^\top E(x) \right) \right] v  \leq \nu_a.
\end{equation}
Moreover, assume that
there exist
$\mu_+<0$, $\mu_-<0$ and $\nu_0 > 0$
such that  for all $x\in [0,\, L]$
\begin{equation}
\label{corondeltb}
\inf_{v:\, v^\top E(x)\, v=1}
v^\top \left[
 E'(x) D(x) - E(x)  D'(x)
+ |\eta_0| \left( E(x)  M(x) + M(x)^\top E(x) \right) \right] v \geq \nu_0.
\end{equation}

\begin{remark}
If $M(x)$ is a diagonal matrix or if $|\eta_0|$ is sufficiently small
or if $L>0$ is sufficiently small,
(\ref{corondelta}) and (\ref{corondeltb}) hold.
If $M^\top=M$, (\ref{corondelta}) and (\ref{corondeltb}) are
equivalent with
$\nu_0 = -\nu_a$.
\end{remark}

%

%
Consider the linear hyperbolic pde
\begin{equation}
\label{pde}
r_t + D\, r_x =
\eta_0\, M\, r
\end{equation}
where for $x\in (0,\, L)$ and $t\in (0,T)$, the state is given by
$r(t,\,x)=
\left(
\begin{array}{r}
r_+(t,\,x)
\\
r_-(t,\,x)
\end{array}
\right).
$
To obtain an initial boundary value
problem, in addition to (\ref{pde})
we consider the initial condition
$r(0,\,x)=0$ for $x\in (0,\, L)$ at the time $t=0$
and
for $t\in (0,T)$
the Dirichlet
 boundary conditions
$
r_+(t,\,0)= u_+(t),\;
r_-(t,\, L)= u_-(t)
$
with boundary controls
$u_+$, and $u_-$  in  $L^2(0,T)$.
%
The resulting initial boundary value problem
\begin{equation}
\label{linearizedsystem}
\left\{
\begin{array}{l}
r(0,\,x)=0,
\\
r_t + D\, r_x = \eta_0\,M\, r,
\\
r_+(t,\,0)= u_+(t),
\\
r_-(t,\, L)= u_-(t),
\end{array}
\right.
\end{equation}
has a solution
$r\in C([0,T], L^2((0,\, L); {\mathbb R}^2))
$.
Moreover, for the boundary traces
of the solution we have
$r_+(\cdot, L)$, $r_-(\cdot,\,0)\in L^2(0,\, T)$.
This follows  with a Picard iteration
along the characteristic curves
similar as in \cite{higdon}, \cite{hoermander}.

\subsection{Unconstrained optimal boundary control}
\label{dynamic optimal boundary control problem}

In this section we define a dynamic optimal boundary control problem for
(\ref{pde})
along with a corresponding static optimal control problem and state the
result relating the solutions of the two problems.

%
%

For $x= (x_+,\, x_-)^\top \in {\mathbb R}^2$,
we use the notation $\| x \|_{{\mathbb R}^2} = \left| x_+^2 + x_-^2 \right|^{1/2}$.
Let strictly convex quadratic functions $f_0$ and $f_L$
be given,
that is for $z \in  {\mathbb R}^2$ we have  $f_0(z)= \tfrac{1}{2} z^\top A_0 z + c_0^\top z$,
$f_L(z)= \tfrac{1}{2} z^\top A_L z + c_L^\top z$,
with  symmetric positive definite $2\times 2$ matrices $A_0$, $A_L$
and vectors $c_0$, $c_L\in {\mathbb R}^2$.
%
Define the Hilbert space
$
H= L^2(0,\, T) \times L^2(0,\, T).
$
For $u=(u_+,u_-) \in H$ and $R=(R_+,R_-)\in H$,
define
\begin{equation}
\label{objectiveneu}
J(u,\, R)
=\int_0^T
f_0( u_+(t),\, R_-(t))
+
f_L( u_-(t),\, R_+(t))
\, dt
.
\end{equation}

{  
\begin{remark}
The assumption of strict convexity of
$f_0$ and $f_L$ can be slightly relaxed.
Only the strict convexity with respect to the control is essential.
Our results also hold if for $q$ as defined in (\ref{qdefinition}) there exists
a constant $\kappa>0$ such that (\ref{qungleichung}) holds.
\end{remark}
}
The choice of the objective function $J(u,\, R)$ is motivated by transportation systems such as gas pipelines,
see Section~\ref{applicationsection}.
%
%
We consider the  dynamic optimal control problem
\begin{equation}
\label{ocplambda}
\left\{
\begin{array}{l}
\min_{u
\in ( L^2(0,\, T))^2} J(u,\, (r_+(\cdot, L),\,r_-(\cdot,\,0))  )\;
\\
\mbox{\rm subject to
(\ref{linearizedsystem})}.
\end{array}
\right.
\end{equation}
With a slight abuse of notation, in the sequel we write
$J(u,\, r)$ instead of
\[J(u,\, (r_+(\cdot, L),\,r_-(\cdot,\,0))  ).\]
Our assumptions imply that the objective function
grows as fast as some real constant multiplied with
$\|u_+\|_{L^2(0,\, T)}^2 + \|u_-\|_{L^2(0,\, T)}^2 $.
Hence the existence of an optimal control follows with
the Direct Method of the Calculus of Variations
by considering a minimizing sequence and
going to a weakly converging subsequence.

In the corresponding static optimal control problem the
initial boundary value problem (\ref{linearizedsystem}) is
replaced by  the boundary value problem
\begin{equation}
\label{staticlinearizedsystem}
\left\{
\begin{array}{l}
D \,R^{(\upsigma)}_x(x) =  \eta_0 \, M\, R^{(\upsigma)}(x),
\\
R_+^{(\upsigma)}(0)= u_+^{(\upsigma)},
\\
R_-^{(\upsigma)}(L)= u_-^{(\upsigma)},
\end{array}
\right.
\end{equation}
with $x\in (0,\, L)$ and
$u^{(\upsigma)} = \left(
 u_+^{(\upsigma)},\,
u_-^{(\upsigma)}
\right)^\top
\in \mathbb R^2$.
Define the objective function
\begin{equation}
\label{objectiveneustatic}
J_0(u^{(\upsigma)},\, R^{(\upsigma)}(x)) =
f_0(u^{(\upsigma)}_+ ,\, R_-^{(\upsigma)}( 0))
+
f_L( u^{(\upsigma)}_-,\, R_+^{(\upsigma)}(L))
%
.
\end{equation}
The static optimization problem that
corresponds to the dynamic
problem (\ref{ocplambda})
is
\begin{equation}
\label{ocplambdastatic}
\left\{
\begin{array}{l}
\min_{u^{(\upsigma)}\in \mathbb R^2 }
J_0(u^{(\upsigma)},\, R^{(\upsigma)}(x))
\;
\\
\mbox{\rm subject to
(\ref{staticlinearizedsystem})}.
\end{array}
\right.
\end{equation}

We show in Section~\ref{derivation} that solutions of the dynamic and static problem are
related in the sense of the following turnpike result.

\begin{theorem}
\label{satz1}
Let $u^{(\upsigma)}$ denote the  optimal static control
that solves (\ref{ocplambdastatic})
and let $u^{(\updelta,\, T)}$ denote the optimal dynamic control
that solves (\ref{ocplambda}) with the finite time horizon $T>0$.
Let $r^{(\upsigma)}$ and $r^{(\updelta,\, T)}$  denote the corresponding states.
There exists a constant $\bar C>0$  that is independent of $T$ such that
for all $T>0$
\begin{equation}
\label{uniformbounded09122016}
\frac{1}{T} \, \int_0^T \left\|u^{(\updelta,\, T)}(\tau) - u^{(\upsigma)} \right\|^2_{{\mathbb R}^2} \, d\tau
\leq \frac{\bar C}{T}.
\end{equation}
Thus
for all $T>0$  we have the inequality
$
\int_0^1 \left\|u^{(\updelta,\, T)}(T\, s) - u^{(\upsigma)} \right\|^2_{{\mathbb R}^2} \, ds
 \leq \frac{\bar C}{T}.
$
Moreover, there exists a constant
$\tilde D>0$ such that
for all $T>0$
we have
\begin{equation}
\label{20042018l}
\int_{0}^{T}
\int_0^L
\left\|
r^{(\updelta,\, T)}(\tau,\, x) - r^{(\upsigma)}(x)
\right\|^2_{\mathbb R^2} \, d\tau
\leq \tilde D
.
\end{equation}

%

\end{theorem}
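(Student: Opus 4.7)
The plan is to combine strict convexity of the objective with the storage--function dissipativity encoded in (\ref{corondelta})--(\ref{corondeltb}), in the style of the turnpike results in \cite{trelat}. The first move is to pass to a shifted problem. Set $\tilde u = u - u^{(\upsigma)}$ and $\tilde r(t,x) = r(t,x) - R^{(\upsigma)}(x)$; by linearity of (\ref{linearizedsystem}), $\tilde r$ satisfies the same hyperbolic system with boundary controls $\tilde u$ and with initial condition $\tilde r(0,\cdot) = -R^{(\upsigma)}(\cdot) \in L^2(0,L)^2$, a fixed function independent of $T$. Taylor-expanding the quadratic objective around the static optimum yields
\[
J(u,r) = T\,J_0(u^{(\upsigma)},R^{(\upsigma)}) + \int_0^T \bigl[ \tfrac{1}{2}\tilde z_0^\top A_0 \tilde z_0 + \tfrac{1}{2}\tilde z_L^\top A_L \tilde z_L \bigr]\, dt + \int_0^T L_0(\tilde z_0) + L_L(\tilde z_L)\, dt,
\]
with $\tilde z_0(t) = (\tilde u_+(t), \tilde r_-(t,0))^\top$, $\tilde z_L(t) = (\tilde u_-(t), \tilde r_+(t,L))^\top$, and linear functionals $L_0, L_L$ whose coefficients are $\nabla f_0, \nabla f_L$ evaluated at the static optimum.

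Next I would produce the upper bound by feeding $u \equiv u^{(\upsigma)}$ into the dynamic problem as a competitor. The resulting state is $R^{(\upsigma)}+w$, where $w$ solves (\ref{linearizedsystem}) with zero Dirichlet data and initial datum $-R^{(\upsigma)}$. Using the storage functional $S(\varphi) = \int_0^L \varphi(x)^\top E(x)\varphi(x)\, dx$ with the weights from the first version of (\ref{corondelta}), I differentiate $S(w(t,\cdot))$ along the PDE, integrate by parts in $x$, and invoke (\ref{corondelta}) together with the vanishing of $w_+(t,0)$ and $w_-(t,L)$ to get $\tfrac{d}{dt} S(w(t)) \le \nu_a\,\|w(t)\|^2_{(L^2)^2}$. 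Gronwall then gives exponential decay of $w$, whose contribution to $J$ is therefore bounded by a constant independent of $T$. This establishes $J(u^{(\updelta,T)},r^{(\updelta,T)}) \le T J_0 + C_1$.

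The lower bound is the heart of the argument. I would seek $\kappa>0$ and a storage function $\tilde S$ of the form $\tilde S(\varphi) = \Xi_1 \int_0^L \varphi^\top E(x)\varphi\, dx + \int_0^L p^{(\upsigma)}(x)^\top \varphi(x)\, dx$, with $p^{(\upsigma)}$ the static adjoint state, such that pointwise in time
\begin{equation}
\label{qungleichung}
q(\tilde u(t),\tilde r(t,\cdot)) := \bigl[\text{integrand of } J\bigr] - J_0(u^{(\upsigma)},R^{(\upsigma)}) - \tfrac{d}{dt}\tilde S(\tilde r(t,\cdot)) \ge \kappa\,\|\tilde u(t)\|_{\mathbb R^2}^2.
\tag{$\star$}
\end{equation}
The linear correction by $p^{(\upsigma)}$ is chosen so that, after inserting the PDE for $\tilde r$ and integrating by parts in $x$, the linear functionals $L_0, L_L$ in the expansion above cancel against the boundary evaluation of $p^{(\upsigma)}\!\cdot\tilde r$ -- this cancellation is exactly the static Pontryagin/Euler--Lagrange condition satisfied by $(u^{(\upsigma)},R^{(\upsigma)},p^{(\upsigma)})$. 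The remaining pure quadratic terms then combine the strict positivity of $A_0, A_L$ with the dissipative volume term $-\nu_a \int_0^L \tilde r^\top E \tilde r\, dx$ from (\ref{corondelta}); since $A_0, A_L$ are positive definite in the control direction, the inequality $(\star)$ follows. Integrating $(\star)$ from $0$ to $T$ and using $\tilde S(\tilde r(T,\cdot)) \ge -c_0$ (by boundedness of $p^{(\upsigma)}$ and Young's inequality, absorbing the $\int \tilde r^2$ term into the volume dissipation) yields $J(u^{(\updelta,T)},r^{(\updelta,T)}) \ge T J_0 + \kappa \int_0^T \|\tilde u^{(\updelta,T)}\|^2\, d\tau - C_2$. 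Combining with the upper bound gives (\ref{uniformbounded09122016}); the claim on the unit interval is immediate by substituting $s = \tau/T$.

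For the state estimate (\ref{20042018l}), I would re-use the storage function $S$ but now with the weights from (\ref{corondeltb}): differentiating $S(\tilde r(t,\cdot))$ along the PDE with nonzero boundary controls $\tilde u$, integration by parts in $x$, and (\ref{corondeltb}) yield $\tfrac{d}{dt}S(\tilde r(t,\cdot)) \le -\nu_0 \int_0^L \|\tilde r\|^2_{\mathbb R^2}\, dx + C_3\|\tilde u(t)\|^2_{\mathbb R^2}$. Integrating in time, using $S \ge 0$, and inserting the already-proved bound $\int_0^T \|\tilde u^{(\updelta,T)}\|^2\, d\tau \le \bar C$ gives (\ref{20042018l}). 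The main obstacle will be Step 3, specifically the construction of $\tilde S$ so that the linear cross-terms $L_0, L_L$ are absorbed and so that the residual quadratic form admits a lower bound of the form $\kappa\|\tilde u\|^2$; the exponential-weight hypotheses (\ref{corondelta})--(\ref{corondeltb}) have been designed precisely to make this balancing succeed at both boundaries simultaneously, but the bookkeeping of signs of the characteristics $d_\pm$ on the controlled versus uncontrolled boundary traces requires care.
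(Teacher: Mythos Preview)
Your dissipativity/storage-function route is sound, but it is genuinely different from what the paper does. The paper never constructs a storage function or proves a pointwise inequality like your $(\star)$. Instead it works operator-theoretically: it introduces the control-to-output-trace map $F_T$ and its adjoint $F_T^\ast$ (Lemma~\ref{adjoint24052018}), writes the reduced objective $\tilde J(u)=J(u,F_Tu)$, and observes that $D\tilde J(u^{(\updelta,T)})=0$ by the dynamic optimality system. The central technical step is Lemma~\ref{lemma2016}, which shows that $\|D\tilde J(u^{(\upsigma)})\|_H\le C_D$ uniformly in $T$: using the static optimality system~(\ref{optimalitysystemstatic}) one rewrites this gradient as a linear combination of $(F_T-F_{(\upsigma)})u^{(\upsigma)}$ and $(F_T^\ast-F_{(\upsigma)}^\ast)p_0$, and each of these is uniformly bounded because the corresponding initial-data transients decay exponentially (Lyapunov arguments with the weights from (\ref{corondelta}) and (\ref{corondeltb}) respectively, plus the trace estimate of Lemma~\ref{lemma2017}). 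Then a two-point quadratic expansion of $\tilde J$ at $u^{(\updelta,T)}$ and at $u^{(\upsigma)}$, added together, yields $2q(u^{(\updelta,T)}-u^{(\upsigma)})=\langle D\tilde J(u^{(\upsigma)}),u^{(\upsigma)}-u^{(\updelta,T)}\rangle_H$, and strong convexity (\ref{qungleichung}) gives $\|u^{(\updelta,T)}-u^{(\upsigma)}\|_H\le C_D/\kappa$ directly. The state bound~(\ref{20042018l}) then comes from Lemma~\ref{exponentialdecayconservationlemma19042018}.

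Both approaches feed on the same exponential-decay ingredients encoded in (\ref{corondelta})--(\ref{corondeltb}); they organise them differently. The paper's ``small gradient at the steady optimum $+$ strong convexity'' argument is shorter because the cancellation you engineer via the linear correction $\int p^{(\upsigma)\top}\tilde r$ is done once, abstractly, through the static optimality system~(\ref{optimalitysystemstatic}); the paper also avoids having to lower-bound $\tilde S(\tilde r(T,\cdot))$. Your route has the merit of making the link to the strict-dissipativity framework of \cite{trelat,dammgruene,gruene} fully explicit. One minor correction: in your final step for the state estimate you should invoke the weights $\mu_\pm>0$ from (\ref{corondelta}), not (\ref{corondeltb}); with $\mu_\pm<0$ the uncontrolled output traces $\tilde r_+(t,L)$ and $\tilde r_-(t,0)$ enter the boundary term of the energy identity with the wrong sign, and the inequality you write would not close.
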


Theorem \ref{satz1} states that for increasing time horizon
$T\rightarrow \infty$, the average quadratic mean distance
between the optimal dynamic and the optimal static control
converges to zero with the rate $O(\frac{1}{T})$.
%
\begin{example}
Let real numbers
$R_+^{\ourdesi}$, $R_-^{\ourdesi}$
and
$\lambda \in (0,\, 1)$
be given.
Consider
\begin{equation}
\label{objectiveneu2018}
J(u,\, R)
=\int_0^T
(1 -\lambda)\,\left\|
\left(
\begin{array}{l}
R_+(t) - R_+^{\ourdesi}
\\
R_-(t) - R_-^{\ourdesi}
\end{array}
\right)
\right\|^2_{{\mathbb R}^2}
+
\lambda\,
\left\|
\left(
\begin{array}{l}
 u_+(t)
 \\
 u_-(t)
 \end{array}
\right)
\right\|^2_{{\mathbb R}^2}
 \, dt.
\end{equation}
The  objective function $J(u, \, r)$ is of the form (\ref{objectiveneu})
up to additive constants.
Since the system is hyperbolic, there exists times $t_+$, $t_-\in (0,\, T)$, such that
for $t>t_\pm$, the control value  $u_\pm(t)$ does not influence the
state $r_+(\cdot,L)$, $r_-(\cdot,0)$ respectively.
Thus definition (\ref{objectiveneu2018})
implies
that for $t>t_\pm(t)$, we have $u^{(\updelta,\, T)}_\pm(t)=0$ that is
in the last part of the time interval the
control is switched off since we did not impose
any condition on the terminal state $r_\pm(T,\,\cdot)$
(in contrast to \cite{gutrezu}).
If $D$ and $M$ are constant diagonal matrices,
we have
$u_\pm^{(\updelta,\, T)}(t)=0$ for $t> t_\pm =  T - L/|d_\pm|$
and
$u^{(\upsigma)}_\pm = \left[ \frac{1}{\tfrac{1}{\lambda} -1}   +
\exp\left(\eta_0 \tfrac{m_{\pm\pm}}{|d_\pm|}\,L \right) \right]^{-1}  R_\pm^{\ourdesi}
$.
Moreover, for
$t<t_\pm$ we have
$u_\pm^{(\updelta,\, T)}(t) = u^{(\upsigma)}_\pm$.
Hence
$\int_0^T \left\|u^{(\updelta,\, T)}(\tau) - u^{(\upsigma)} \right\|^2_{{\mathbb R}^2} \, d\tau
=
\tfrac{L}{d_+} \, |u^{(\upsigma)}_+|^2 + \tfrac{L}{|d_-|} \,  |u^{(\upsigma)}_-|^2
.
$


%

\end{example}
\subsection{Optimal boundary control problems with an integer control constraint}
\label{integergonstrainedproblems}
In the application often controls with a finite range of control values appear.
In particular, binary decisions can be modeled in this form.
For overviews on optimal control problems of this type
see \cite{Antsaklis2014}, \cite{Hante2017} and the
references therein.
In this section we show that also for these problems, the turnpike phenomenon can occur.
Let {\mblue
${\cal F} $ denote a finite set of integers that
contains zero.}
We consider the  integer constraint
\begin{equation}
\label{21122016}
u_+(t) \in
{\mblue \cal F}
\;\;\;\mbox{\rm for $t$ almost everywhere in $(0,T)$.}
\end{equation}
The controls that satisfy (\ref{21122016})
are simple functions with values in
${\mblue \cal F}$
almost everywhere.
In order to avoid chattering controls that switch infinitely often
between the values
in ${\mblue \cal F}$
(this is also called the Zeno phenomenon),
in the objective function switching costs are added that penalize
the switching.
For this purpose we use a penalty term with the total variation
\[{\rm Var}(u_+)= \int_0^T d\left| u_+ \, \right|
=
\sup\limits_P \sum_i \left|u_+(t_{i+1}) - u_+(t_i)\right|,
\]
where the supremum is over all possible finite partitions $P$ of $[0,\,T]$.
%

In order to make the discussion more concise, we assume for the integer constrained case
that $J$ is as in (\ref{objectiveneu2018}). Let a penalty parameter $\nu>0$ be given.
Consider the dynamic optimal boundary control problem with integer control constraint
\begin{equation}
\label{ocplambdaswitch}
\left\{
\begin{array}{l}
\min_{u\in ( L^2(0,\, T))^2} J(u,\, r)+
\nu \, T\,{\rm Var}(u_+)
\;
\\
\mbox{\rm   subject to
$(u,\,r)$ solves
(\ref{linearizedsystem})}
\,\;{\mbox{\rm and $u_+$ satisfies}}\;\; (\ref{21122016}).
\end{array}
\right.
\end{equation}
The additional switching--cost term in
the objective functions
penalizes the number of switchings
between the values
in ${\mblue \cal F}$.
Existence of optimal solutions then follows from a compactness argument similar
as in \cite{HLS}.
Let
%
$
\omega(T)
$
denote the optimal value of the dynamic optimal control problem (\ref{ocplambdaswitch}).
The corresponding static optimal control problem  with integer constraint is
\begin{equation}
\label{ocplambdastaticswitch}
\left\{
\begin{array}{l}
\min_{
u^{(\upsigma)}_+\in  {\mblue \cal F},\,
u^{(\upsigma)}_-\in {\mathbb R},
\,
 \,R^{(\upsigma)}\in ( L^{2}(0,\, L))^2} J_0(u^{(\upsigma)},\, R^{(\upsigma)})\;
\\
\mbox{\rm subject to
(\ref{staticlinearizedsystem}) }
\end{array}
\right.
\end{equation}
with
$J_0(u^{(\upsigma)},\, R^{(\upsigma)}) = (1 -\lambda)\,\left\|R^{(\upsigma)} - R^{\ourdesi}\right\|^2_{{\mathbb R}^2}
+
\lambda\,
\left\|
u^{(\upsigma)}
\right\|^2_{{\mathbb R}^2}
$.
In the objective function of the static problem (\ref{ocplambdastaticswitch}),
the switching cost does not appear.
 If we insert the zero control $(u_+(t),\, u_-(t))=(0,0)$ in
 the objective function, the switching constraint (\ref{21122016})
 is satisfied and  we also obtain an upper bound
 for the optimal value $\omega(T)$. Since the zero control
 generates  the zero state, we have
$
\omega(T) \leq
(1 -\lambda)
\, T \, \|R^{\ourdesi}\|^2_{\mathbb R^2}
$
.
For the optimal dynamic control
$u^{(\ast)}$ that solves
(\ref{ocplambdaswitch})
this yields
 $
 {\rm Var}(u_+^{(\ast)})
 \leq \frac{1}{\nu} \, (1 -\lambda)
\, \|R^{\ourdesi}\|^2_{\mathbb R^2}$.
Hence if
\begin{equation}
\label{16012017}
\nu
>(1 -\lambda)
\, \|R^{\ourdesi}\|^2_{\mathbb R^2}
\end{equation}
the optimal control $u_+^{(\ast)}$ at $x=0$ is constant.
In this case   $\omega(T)$ is equal to the optimal value of the problem
\begin{equation}
\label{ocplambdaswitchnularge}
\left\{
\begin{array}{l}
\min_{u_+\in  {\mblue \cal F},\, u_-\in  L^2(0,\, T)} J(u,\, r)
\\
\mbox{\rm   subject to
$(u,\, r)$ solves
(\ref{linearizedsystem})}.
\end{array}
\right.
\end{equation}
If a given   value of $u_+\in {\mblue \cal F}$
is fixed in
(\ref{ocplambdaswitchnularge}),
we obtain
an optimal boundary
control problem with a time--dependent control
$u_-(t)$ at $x=L$ and constant boundary control at $x=0$.
The turnpike results
from Section \ref{dynamic optimal boundary control problem}
can be adapted to this situation.



In Theorem \ref{satzintegerturnpike} we state
 that for sufficiently large values of $\nu$, that is
if (\ref{16012017}) holds,
the solution of  (\ref{ocplambdaswitch})
%
and the solution of the corresponding static problem
(\ref{ocplambdastaticswitch})
are related by the turnpike phenomenon.
\begin{theorem}
\label{satzintegerturnpike}
Assume that $\nu$ is sufficiently large
in the sense that (\ref{16012017}) holds.
%
Let $u^{(\updelta,\,T)}\in (L^2(0,\,T))^2$ denote
a solution of the optimal dynamic control problem
(\ref{ocplambdaswitch}).


There exists
a constant $\bar C>0$ that is independent of $T$
 such that for
$T>0$ sufficiently large,
there exists a solution
$u^{(\upsigma)}$ of the optimal static control problem
(\ref{ocplambdastaticswitch})
with
$u^{(\updelta,\,T)}_+(t) = u^{(\upsigma)}_+$ for all $t\in [0,\, T]$
%
and
%
\begin{equation}
\label{uniformbounded09122016minushaupt}
\frac{1}{T}\, \int_0^{T} \left\|u^{(\updelta,\,T)}(t) - u^{(\upsigma)}\right\|^2_{{\mathbb R}^2}  \, d\,t
\leq \frac{\bar C}{T}
.
\end{equation}
Moreover, for the corresponding optimal states
(\ref{20042018l}) holds.
\end{theorem}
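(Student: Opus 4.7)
The plan is to exploit the reduction already carried out in the text preceding the theorem: under (\ref{16012017}) the penalty term forces ${\rm Var}(u_+^{(\updelta,\,T)}) = 0$, so any dynamic optimizer satisfies $u_+^{(\updelta,\,T)}(t) \equiv k_T$ for some $k_T \in \mathcal{F}$. Problem (\ref{ocplambdaswitch}) therefore splits into an outer discrete minimization over the finite set $\mathcal{F}$ and, for each fixed $k \in \mathcal{F}$, an inner continuous problem of minimizing $J((k,u_-),r)$ over $u_- \in L^2(0,T)$ subject to (\ref{linearizedsystem}) with $u_+ \equiv k$. Problem (\ref{ocplambdastaticswitch}) splits analogously into a choice of $k\in\mathcal{F}$ and a one-dimensional static minimization over $u_-^{(\upsigma)}\in\mathbb{R}$. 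I denote the optimal inner values by $\omega_T(k)$ and $\omega_\infty(k)$, respectively.

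For each fixed $k$ I would apply the strategy of Theorem \ref{satz1} to the inner dynamic problem. Although one control is frozen, the system is still linear hyperbolic, the cost is still strictly convex quadratic in the remaining control $u_-$, and the dissipation inequalities (\ref{corondelta})--(\ref{corondeltb}) continue to hold. The only adjustment is that the natural reference trajectory is not the zero state but the static profile $R^{(\upsigma)}(\cdot;k,u_-^{(\upsigma)}(k))$; writing $r = R^{(\upsigma)} + \tilde r$ reduces the inner problem, after the characteristic fill-in time $L/\min|d_\pm|$, to a problem that fits the framework of Section~\ref{derivation}. This would yield, uniformly in $T$,
\[
\omega_T(k) = T\,\omega_\infty(k) + O(1),
\qquad
\frac{1}{T}\int_0^T \bigl|u_-^{(\updelta,\,T)}(t;k) - u_-^{(\upsigma)}(k)\bigr|^2\,dt \leq \frac{\bar C_k}{T},
\]
together with the corresponding state bound of the form (\ref{20042018l}).

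Next I would compare the outer problems. Picking any $k^\sigma \in \mathcal{F}$ that minimizes $\omega_\infty$, the optimality inequality $\omega_T(k_T) \leq \omega_T(k^\sigma)$ combined with the expansion above gives $\omega_\infty(k_T) \leq \omega_\infty(k^\sigma) + O(1/T)$. Since $\mathcal{F}$ is finite, for $T$ sufficiently large this forces $\omega_\infty(k_T) = \omega_\infty(k^\sigma)$, so $k_T$ is itself an optimal integer choice for the static problem. I then define $u^{(\upsigma)} := (k_T,\,u_-^{(\upsigma)}(k_T))$, which solves (\ref{ocplambdastaticswitch}) and automatically satisfies $u_+^{(\updelta,\,T)}(t) = u_+^{(\upsigma)}$. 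Setting $\bar C := \max_{k\in\mathcal{F}}\bar C_k$, which is finite, then yields (\ref{uniformbounded09122016minushaupt}) and the state bound (\ref{20042018l}).

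The step I expect to be the main obstacle is proving the expansion $\omega_T(k) = T\,\omega_\infty(k) + O(1)$ with a remainder that is uniform in $k \in \mathcal{F}$. The transient arising from the mismatch between the zero initial datum and the static profile associated with $u_+ \equiv k$ must contribute only an $O(1)$ perturbation to the cost, independently of $T$ and of $k$. The dissipation inequalities (\ref{corondelta})--(\ref{corondeltb}) give the exponential-in-time decay of such a transient, and the finiteness of $\mathcal{F}$ makes the constants uniform, but turning this observation into a clean bound on both the cost remainder and the suboptimality it induces in the inner problem is the technical core of the argument; once this is in hand, the finite-set comparison collapses cleanly and the theorem follows.
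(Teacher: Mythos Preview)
Your proposal is correct and follows essentially the same strategy as the paper: reduce via (\ref{16012017}) to a constant $u_+\equiv k_T\in\mathcal{F}$, establish a one-sided turnpike estimate for the inner problem at each fixed $k$ (the paper does this as a separate Lemma~\ref{satz1minus} rather than by reducing to Theorem~\ref{satz1}), and then use a cost expansion together with the finiteness of $\mathcal{F}$ to force $k_T$ to be static-optimal for large $T$. The paper in fact only proves an $O(\sqrt{T})$ remainder in the expansion $\omega_T(k)=T\,\omega_\infty(k)+O(\sqrt{T})$ via Cauchy--Schwarz on the cross term, not the $O(1)$ you anticipate, but either bound suffices for the finite comparison argument.
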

%
Due to the integer constraint
 in general the solutions of (\ref{ocplambdaswitch})
and
(\ref{ocplambdastaticswitch})
are not uniquely determined.
Theorem \ref{satzintegerturnpike} implies that
if the solution of the static optimal control problem (\ref{ocplambdastaticswitch})
is unique,
for all sufficiently large time horizons $T>0$,
the first component of the dynamic optimal control is
independent of $t$ and $T$.
%
The proof of 
Theorem  \ref{satzintegerturnpike}
is presented in Section \ref{derivationint}.

\section{Analysis for the unconstrained case} \label{derivation}

\subsection{An adjoint operator}
\label{adjointoperator}

For a given  time $T>0$, we define the operator
$
F_T\, (u)
$
that maps the boundary control $u = (u_+(\cdot),\, u_-(\cdot))\in H$ to
the boundary trace
$(r_+(\cdot, L),\, r_-(\cdot,\, 0) )  $
of the solution of the linear initial boundary value problem
(\ref{linearizedsystem}).
Thus we have
$
F_T\,
u
=
\left(
\begin{array}{r}
r_+(\cdot, L)
\\
r_-(\cdot,\, 0)
 \end{array}
\right).
$

For a given  time $T>0$
and a given initial state $h_0 \in (L^2(0,\, L))^2$
 we define the operator
$
G_T\, (u,\, h_0)
$
that maps the boundary control $u = (u_+,\, u_-)\in
H$ and $h_0$ to the solution
$(r_+,\, r_- ) \in (L^2((0,\, T)\times (0,\, L)))^2 $
of the initial boundary value problem
\begin{equation}
\label{system201819041533}
\left\{
\begin{array}{l}
r(0,\,x)=h_0(x),
\\
r_t + D\, r_x = \eta_0\,M\, r,
\\
r_+(t,\,0)= u_+(t),
\\
r_-(t,\, L)= u_-(t).
\end{array}
\right.
\end{equation}

\begin{lemma}
\label{exponentialdecayconservationlemma19042018}
Let $u \in H$ be given.
There exists a constant
$\tilde C>0$
that is independent of $T$ such that
for all $T>0$ we have
\begin{equation}
\label{19042018a}
\int_{0}^{T}
\int_0^L
\left\|
(G_T(u,\, h_0))
(\tau,\, x)
\right\|^2_{\mathbb R^2}\, dx  \, d\tau
\leq \tilde C
\left(
\left\| u\right\|^2_{H}
+
\left\| h_0 \right\|^2_{(L^2(0,\, L))^2}
\right)
.
\end{equation}
\end{lemma}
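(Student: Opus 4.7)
The plan is to establish the estimate via a weighted Lyapunov functional that exploits the dissipativity condition (\ref{corondelta}). With $\mu_+, \mu_- > 0$ chosen as in (\ref{corondelta}), define
\[
V(t) = \int_0^L r(t,x)^\top E(x) r(t,x) \, dx,
\]
where $r = G_T(u, h_0)$. Since $E(x)$ is uniformly positive definite and bounded on $[0,L]$, $V(t)$ is equivalent to $\|r(t,\cdot)\|_{(L^2(0,L))^2}^2$ with constants independent of $T$. So it suffices to bound $\int_0^T V(t) \, dt$.

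First I would differentiate $V(t)$ formally along smooth solutions (justifying the calculation later by a density argument, approximating $u$ and $h_0$ by smooth data and using well-posedness of (\ref{system201819041533})). Using $r_t = -D r_x + \eta_0 M r$ and the fact that $D, E$ are diagonal so that $ED = DE$ is symmetric, integration by parts in $x$ yields
\[
\frac{d}{dt} V(t) = -\bigl[r^\top D E\, r\bigr]_{x=0}^{x=L} + \int_0^L r^\top\!\Bigl[(ED)'(x) - |\eta_0|\bigl(EM + M^\top E\bigr)\Bigr] r \, dx,
\]
where I have used $\eta_0 = -|\eta_0|$ and symmetrized the $M$-term. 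Applying (\ref{corondelta}) pointwise to $r(t,x)$ bounds the integrand above by $\nu_a\, r^\top E r$, so the interior term is at most $\nu_a V(t)$ with $\nu_a < 0$.

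Next I would analyze the boundary contribution. At $x=L$, the outgoing trace $r_+(t,L)$ contributes $-d_+(L) e^{-\mu_+ L} r_+(t,L)^2 \le 0$ and may be discarded, while the incoming boundary data yields $-d_-(L) e^{\mu_- L} u_-(t)^2 = |d_-(L)| e^{\mu_- L} u_-(t)^2$. Similarly at $x=0$ the outgoing trace $r_-(t,0)$ contributes a non-positive term, and the incoming data yields $d_+(0) u_+(t)^2$. Collecting these, there is a constant $C_u$ (depending only on $d_\pm(0), d_\pm(L), \mu_\pm, L$) with
\[
\frac{d}{dt} V(t) \le \nu_a V(t) + C_u \left( |u_+(t)|^2 + |u_-(t)|^2 \right).
\]

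Finally, Gronwall's inequality gives $V(t) \le e^{\nu_a t} V(0) + C_u \int_0^t e^{\nu_a(t-\tau)} \|u(\tau)\|_{\mathbb R^2}^2 \, d\tau$. Integrating in $t$ from $0$ to $T$ and applying Fubini to the double integral, both the initial-data contribution and the control contribution get multiplied by $\int_0^\infty e^{\nu_a s} ds = 1/|\nu_a|$, which is finite and independent of $T$ because $\nu_a < 0$. Going back to the equivalent $L^2$-norm completes the estimate (\ref{19042018a}) with a uniform-in-$T$ constant $\tilde C$. The main obstacle is purely bookkeeping: checking that the signs of $d_\pm$ combined with the signs of $\mu_\pm$ in (\ref{corondelta}) make the outgoing boundary contributions non-positive so they can be dropped, while the incoming ones can be absorbed into the $\|u\|_H^2$ term; this is also the point where the assumption $\nu_a < 0$ (rather than merely finiteness) is crucial, since without the strict negativity the $t$-integration would produce a factor linear in $T$.
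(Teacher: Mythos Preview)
Your proof is correct and follows essentially the same strategy as the paper: the same weighted Lyapunov functional built from $E(x)$ and assumption (\ref{corondelta}), the same boundary analysis dropping the outgoing traces and keeping the incoming ones as the control contribution, followed by Gronwall and integration in $t$. The only technical difference is that the paper uses a time-averaged functional $E_a(t)=\tfrac12\int_t^{t+1}\int_0^L r^\top E r\,dx\,d\tau$ and a discrete summation over unit intervals, whereas you work with the pointwise functional $V(t)$ and continuous Gronwall plus Fubini; your version is the cleaner implementation of the same idea.
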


%
%
\begin{proof}
Let real numbers $\mu_+>0$
 and
$\mu_->0$ be given.
Define the matrix $E(x)$ as in
(\ref{edefinition2018}).
For $t>T$ we define
$u_\pm(t)=0 $.
For $t>0$ consider the Lyapunov functional
\begin{equation}
\label{lyapunovdefinition20042018}
E_a(t)
 =
 \frac{1}{2}
 \int_t^{t+1}
\int_0^L
\left( r(\tau,\, x)  \right)^\top\, E(x)\,
  r(\tau,\, x) \, dx \, d\tau
  \end{equation}
where $r$ is
the solution (\ref{system201819041533})
for $t>0$.
For the time derivative of $E_a$
we obtain
\begin{eqnarray*}
E_a'(t) & = &
2\int_{t}^{t+1}
\int_0^L
-\left( r(\tau,\, x)\right)^\top
\,
E(x)\,D(x)
\,
\left( r(\tau,\, x)  \right)_x
\\
& - &
\frac{1}{2}\,
\left( r(\tau,\, x)\right)^\top
\,
E(x)\,D'(x)
\,
\left( r(\tau,\, x)  \right)
- \left( r(\tau,\, x)  \right)^\top \, M_1(x)
\,\left( r(\tau,\, x)  \right)
\,
dx\,
d \tau
\end{eqnarray*}
with the symmetric matrix $M_1$ defined as
\begin{equation}
\label{edefinition}
M_1(x) = \frac{|\eta_0|}{2} \left[ E(x)\, M(x)+ M(x)^\top \, E(x) \right]  - \frac{1}{2} \, D'(x)\, E(x) .
\end{equation}

Integration by parts yields
$E_a'(t) = T_B  + T_R$ with
\begin{eqnarray*}
T_B
& := &
- \int_{t}^{t+1}
\left( r(\tau,\, x) \right)^\top
\,
E(x)\,D(x)
\,
\left( r(\tau,\, x)   \right)|_{x=0}^L
d\tau
\end{eqnarray*}
and
\begin{eqnarray*}
T_R & := & \int_{t}^{t+1}
\int_0^L
\left( r(\tau,\, x)  \right)^\top
\,
E'(x)\,D(x)
\,
\left( r(\tau,\, x)  \right)\,dx \,
d\tau
\\
& &
- 2 \,
\int_{t}^{t+1}
\int_0^L
 \left( r(\tau,\, x) \right)^\top \, M_1(x)
\,\left( r(\tau,\, x)   \right)
\,
dx \, d\tau.
\end{eqnarray*}
Define
$
\xi(t)= \max\left\{ |d_-(L)|\,{\rm e}^{\mu_- \,L}, \; d_+(0)     \right\}
\int_{t}^{t+1}
\left\|
\left(u_+(\tau),\, u_-(\tau) \right)
\right\|_{\mathbb R^2}^2 \, d\tau
$
.
Then we have
$T_B \leq \xi(t)$.
Due to
assumption (\ref{corondelta})
we can choose  $\mu_+>0$ and $\mu_->0$  such that
\begin{eqnarray*}
T_R &  \leq  &
\nu_a \, E_a(t).
\end{eqnarray*}
 This yields
 \[E_a'(t)  = T_B  + T_R \leq  \nu_a \, E_a(t) + \xi(t).\]
By Gronwall's Lemma this implies
for all $j\in \{0,1,2,3,...\}$ the inequality
\[
0 \leq E_a(j+1) \leq  \exp(\nu_a) \, E_a(j) +
\int_j^{j+1}
\xi(t)\, dt
.
\]
By induction, this implies
for all $N\in \{0, \,1,\, 2,\, 3,...\}$
\[
E_a(j+1) \leq  \exp(\nu_a\, (j+1)) \, E_a(0) +  \sum_{k=0}^{j} \exp(\nu_a\, (j -k))
\,
\int_k^{k+1}
\xi(t)\, dt.
\]
Hence  we obtain
\begin{equation}
\label{19042018b}
\sum_{j=0}^N
E_a(j) \leq  \left(\sum_{j=0}^\infty \exp(\nu_a\, j)\right)  \,\left(  E_a(0) +  \sum_{j=0}^N
\int_j^{j+1} \xi(t)\, dt \right).
\end{equation}
We have
\[
\sum_{j=0}^N
\int_j^{j+1} \xi(t)\, dt
=
\int_0^{N + 1}  \xi(t)\, dt
\leq
\int_0^\infty  \xi(t)\, dt.
\]
Define
$\tilde K=\max\left\{ |d_-(L)|\,{\rm e}^{\mu_- \,L}, \; d_+(0)     \right\}$.
The definition of $\xi$ implies that
\[
\tfrac{
\int_0^\infty  \xi(t)\, dt
}{\tilde K}
=
\int\limits_0^\infty
\int\limits_{t}^{t+1}
\left\|
u(\tau)
\right\|_{\mathbb R^2}^2 \, d\tau
\, dt
=
\int\limits_0^T
\int\limits_0^1
\left\|
u(\tau+t)
\right\|_{\mathbb R^2}^2 \, d\tau
\, dt
\leq
\int\limits_0^T
\left\|
u(\tau)
\right\|_{\mathbb R^2}^2 \,  d\tau.
\]
Thus we have
$
\int_0^\infty  \xi(t)\, dt
\leq
\tilde K
\,
\left\|
u
\right\|_{H}
$.
Hence  (\ref{19042018b})   implies $\sum\limits_{j=0}^\infty E_a(j) <\infty$.
There exists a number
$\tilde C_0$ such that
\[
E_a(0)=\frac{1}{2}
 \int_0^{1}
\int_0^L
\left( r(\tau,\, x)  \right)^\top\, E(x)\,
  r(\tau,\, x) \, dx \, d\tau
  \leq
  \tilde C_0\left(
\left\| u\right\|^2_{H}
+
\left\| h_0 \right\|^2_{(L^2(0,\, L))^2}
\right).
\]
We have
$
\sum\limits_{j=0}^\infty E_a(j)
=
 \frac{1}{2}
 \int_0^{\infty}
\int_0^L
\left( r(\tau,\, x)  \right)^\top\, E(x)\,
  r(\tau,\, x) \, dx \, d\tau
  $.
  Hence
  (\ref{19042018b})
  yields
  \[
 \int\limits_0^{\infty}
\int\limits_0^L
\left( r(\tau,\, x)  \right)^\top\, E(x)\,
  r(\tau,\, x) \, dx \, d\tau
  \leq\frac{2}{1-{\rm e}^{\nu_a}}
  \left(
  \tilde C_0
  +
  \tilde K
  \right)
  \left(
\left\| u\right\|^2_{H}
+
\left\| h_0 \right\|^2_{(L^2(0,\, L))^2}
\right)
.\]
  This implies (\ref{19042018a}).
\end{proof}

Using Lemma \ref{exponentialdecayconservationlemma19042018}
and integration by parts
we can prove Lemma \ref{ftlemma1}.
\begin{lemma}
\label{ftlemma1}
The  operator $F_T$ is uniformly bounded as an operator
in
the Hilbert space $H$
that is
there exists a constant $C_N>0$ that is independent of $T$ such that
 for the corresponding operator norm of $F_T$ for all $T>0$ we have
\begin{equation}
\label{ftnorm}
\|F_T \|\leq C_N.
\end{equation}
\end{lemma}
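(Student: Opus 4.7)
The plan is to exploit the weighted $L^2$ energy already used in the proof of Lemma \ref{exponentialdecayconservationlemma19042018}, combined with an integration by parts that exposes the outgoing boundary traces $r_+(\cdot,L)$ and $r_-(\cdot,0)$. Throughout we set $h_0 \equiv 0$ so that $r = G_T(u,0)$ and the initial energy vanishes.

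First, choose $\mu_+, \mu_- > 0$ as in the proof of Lemma \ref{exponentialdecayconservationlemma19042018}, so that the matrix $E(x)$ defined in (\ref{edefinition2018}) is positive definite and condition (\ref{corondelta}) is available. Define the pointwise energy
\begin{equation*}
V(t) = \tfrac{1}{2}\int_0^L r(t,x)^\top E(x)\, r(t,x)\, dx \geq 0.
\end{equation*}
Using the pde (\ref{pde}) to substitute for $r_t$, and then integrating by parts in $x$ on the term involving $E(x)D(x)\,r_x$, I would obtain
\begin{equation*}
V'(t) = -\tfrac{1}{2}\bigl[r(t,x)^\top E(x) D(x) r(t,x)\bigr]_{x=0}^{L} + \tfrac{1}{2}\int_0^L r^\top(E D)'\, r\, dx + \eta_0 \int_0^L r^\top E M\, r\, dx.
\end{equation*}
Since $E$ and $D$ are diagonal with $d_+ > 0$ and $d_- < 0$, the boundary term at $x=L$ contributes $-\tfrac{1}{2} e^{-\mu_+ L} d_+(L)\, r_+(t,L)^2 + \tfrac{1}{2} e^{\mu_- L} |d_-(L)|\, u_-(t)^2$, while the boundary term at $x=0$ contributes $\tfrac{1}{2} d_+(0)\, u_+(t)^2 - \tfrac{1}{2} |d_-(0)|\, r_-(t,0)^2$.

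Next, I integrate this identity over $(0,T)$. Using $V(0)=0$ and $V(T) \geq 0$, the terms carrying the \emph{outgoing} traces $r_+(\cdot, L)$ and $r_-(\cdot, 0)$ appear with a favourable sign and can be moved to the left-hand side:
\begin{equation*}
\tfrac{1}{2}\, e^{-\mu_+ L} d_+(L)\!\int_0^T\!\! r_+(t,L)^2\, dt + \tfrac{1}{2}\, |d_-(0)|\!\int_0^T\!\! r_-(t,0)^2\, dt
\leq K_1 \|u\|_H^2 + K_2\!\int_0^T\!\!\int_0^L \!\|r(t,x)\|_{\mathbb{R}^2}^2\, dx\, dt,
\end{equation*}
where $K_1$ depends only on $d_\pm(0), d_\pm(L), \mu_\pm, L$, and $K_2$ depends on the continuous matrix-valued functions $(ED)'$ and $|\eta_0|(EM+M^\top E)$ on the compact interval $[0,L]$. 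Crucially, neither $K_1$ nor $K_2$ depends on $T$.

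Finally, I invoke Lemma \ref{exponentialdecayconservationlemma19042018} with $h_0 = 0$ to bound the interior integral by $\tilde C\, \|u\|_H^2$ uniformly in $T$. Combining the two inequalities and using the lower bounds $d_+(L)\, e^{-\mu_+ L} > 0$ and $|d_-(0)| > 0$, I deduce that there is a constant $C_N>0$, independent of $T$, with
\begin{equation*}
\|F_T u\|_H^2 = \int_0^T r_+(t,L)^2\, dt + \int_0^T r_-(t,0)^2\, dt \leq C_N^2\, \|u\|_H^2,
\end{equation*}
which is (\ref{ftnorm}). The main (mild) obstacle is bookkeeping of signs when performing the integration by parts: one has to verify that the outgoing-trace contributions from the two endpoints genuinely carry the correct sign after weighting by $E(x)D(x)$, so that the estimate is not polluted by $T$-dependent constants. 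This works precisely because $d_+ > 0$ at $x=L$ and $d_- < 0$ at $x=0$, so the hyperbolic flow pushes $r_+$ out at $x=L$ and $r_-$ out at $x=0$.
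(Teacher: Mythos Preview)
Your proposal is correct and follows exactly the approach the paper indicates: the paper's proof of Lemma~\ref{ftlemma1} is not written out but is stated to follow from Lemma~\ref{exponentialdecayconservationlemma19042018} and integration by parts, which is precisely the route you take --- differentiate the weighted energy $V(t)$, integrate by parts in $x$ to expose the outgoing traces with the correct sign, integrate in time, and absorb the interior remainder via Lemma~\ref{exponentialdecayconservationlemma19042018}. As a minor remark, since you already have $\mu_+,\mu_->0$ chosen so that (\ref{corondelta}) holds, the interior quadratic form is in fact $\leq \nu_a\,V(t)\leq 0$, so one could bypass the appeal to Lemma~\ref{exponentialdecayconservationlemma19042018} altogether; but your version is equally valid and matches the paper's stated plan.
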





For the analysis of the boundary
control problem, the study of the  adjoint operators for $F_T$
defined at the end of
\ref{adjointoperator}
is essential.
The adjoint operator  $F_T^\ast$ that satisfies the equation
\begin{equation}
\label{adjointdesiredequation}
\int_0^T
\left\langle F_T\,(u)(t),
\,
z_T(t)
\right\rangle_{\mathbb R^2}
\,dt
=
\int_0^T
\left\langle
u(t),
\,
F_T^\ast(z_T)(t)
\right\rangle_{\mathbb R^2}
\,dt
\end{equation}
for all $z_T \in {\cal D}(F_T^\ast)= H$
where
$\langle\cdot,\,\cdot \rangle_{{\mathbb R}^2}$ denotes the usual
scalar product in ${\mathbb R}^2$.
Due to (\ref{ftnorm}) we have
the inequality
\begin{equation}
\label{ftnormadjoint}
\|F_T^\ast \|\leq C_N.
\end{equation}
Similar as in  \cite{coron}, we determine $F_T^\ast$ in the following lemma.
\begin{lemma}
\label{adjoint24052018}
For  $z_T = (z_+^T, \, z_-^T)\in {\cal D}(F_T^\ast)= H$,  
define $z = ( z_+(\cdot),\, z_-(\cdot))  $
as the solution of the adjoint system
(where $(t,\,x)\in (0,\, T)\times (0,\, L)$)
\begin{equation}
\label{adjointsystem}
\left\{
\begin{array}{l}
z(T,\,x)=0,\, x\in (0,L),
\\
z_t(t,\, x) + D\, z_x(t,\, x) = - \eta_0\,M(x)^\top\, z(t,\,x) - D'(x)\, z(t,\,x) ,
\\
z_+(t,\, L)=  \frac{1}{d_+(L)}\,z_+^T(t),
\\
z_-(t,\,0)=   \frac{1}{|d_-(0)|}\,   z_-^T(t).
\end{array}
\right.
\end{equation}
Then we have
\begin{equation}
F_T^\ast\,\left(
\begin{array}{r}
z_+^T(\cdot)
\\
  z_-^T(\cdot)
 \end{array}
\right)
=
\left(
\begin{array}{r}
d_+(0) \, z_+(\cdot,\, 0)
\\
|d_-(L)|\, z_-(\cdot, L)
 \end{array}
\right).
\end{equation}
\end{lemma}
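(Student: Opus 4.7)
The plan is to derive the identity defining $F_T^\ast$ by a standard Green-type duality calculation between the forward IBVP (\ref{linearizedsystem}) and the backward adjoint system (\ref{adjointsystem}). First, I would check that the adjoint system is well-posed in $L^2$: after the time reversal $\tau = T-t$ it becomes a forward linear hyperbolic IBVP of exactly the same structure as (\ref{linearizedsystem}) (with $D$ replaced by $-D$, which simply swaps the roles of the two characteristic families), so the Picard iteration argument cited after (\ref{linearizedsystem}) gives $z \in C([0,T],(L^2(0,L))^2)$ together with $L^2$ boundary traces $z_+(\cdot,0)$ and $z_-(\cdot,L)$. By density I may do the computation below assuming $u$, $z_T$ and the data are smooth and then pass to the $L^2$ limit using Lemma~\ref{ftlemma1} and the analogous bound for $z$.

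The core computation is to differentiate $\int_0^L r(t,x)^\top z(t,x)\,dx$ in $t$. Using (\ref{linearizedsystem}) for $r_t$ and (\ref{adjointsystem}) for $z_t$, the two $\eta_0$-terms combine as $\eta_0 r^\top M^\top z - \eta_0 r^\top M^\top z = 0$, and the remaining terms collapse into a total $x$-derivative because $\frac{\partial}{\partial x}(r^\top D z) = r_x^\top D z + r^\top D' z + r^\top D z_x$; concretely,
\begin{equation*}
\frac{d}{dt}\int_0^L r(t,x)^\top z(t,x)\,dx = -\bigl[r(t,x)^\top D(x) z(t,x)\bigr]_{x=0}^{x=L}.
\end{equation*}
Integrating from $0$ to $T$ and using $r(0,\cdot)=0$ together with $z(T,\cdot)=0$ makes the left-hand side vanish, leaving the identity
\begin{equation*}
0 = \int_0^T \bigl(r^\top D z\bigr)\big|_{x=0}\,dt - \int_0^T \bigl(r^\top D z\bigr)\big|_{x=L}\,dt .
\end{equation*}

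Next I would plug in the four boundary conditions. At $x=L$: $r_-(t,L)=u_-(t)$ and $z_+(t,L)=z_+^T(t)/d_+(L)$, so $(r^\top D z)|_{x=L}=r_+(t,L)z_+^T(t)+d_-(L)u_-(t)z_-(t,L)$; since $d_-(L)<0$, I write $d_-(L)=-|d_-(L)|$. At $x=0$: $r_+(t,0)=u_+(t)$ and $z_-(t,0)=z_-^T(t)/|d_-(0)|$, so $(r^\top D z)|_{x=0}=d_+(0)u_+(t)z_+(t,0)-r_-(t,0)z_-^T(t)$. Substituting and rearranging produces
\begin{equation*}
\int_0^T\!\bigl[r_+(t,L)z_+^T(t)+r_-(t,0)z_-^T(t)\bigr]dt = \int_0^T\!\bigl[d_+(0)z_+(t,0)\,u_+(t)+|d_-(L)|z_-(t,L)\,u_-(t)\bigr]dt,
\end{equation*}
which is exactly (\ref{adjointdesiredequation}) with the claimed formula for $F_T^\ast z_T$.

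The only genuinely delicate point is the density argument, since for merely $L^2$ data the pointwise-in-$t$ integration by parts is not immediately justified. I would handle this by approximating $u$, $z_T$ (and, if needed, replacing the zero initial datum by a smooth $h_0$ in the sense of Lemma~\ref{exponentialdecayconservationlemma19042018}) by smooth compactly supported sequences, performing the calculation above classically, and passing to the limit: the right-hand side converges by boundedness of the trace maps (Lemma~\ref{ftlemma1} for $F_T$ and its analogue for the backward system), the left-hand side is continuous in the inner product, and so the identity extends to all of $H$. The sign tracking between $d_-<0$ and the factors $1/|d_-(0)|$, $1/d_+(L)$ in (\ref{adjointsystem}) is the only real bookkeeping hazard.
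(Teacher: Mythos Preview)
Your proposal is correct and is precisely the standard duality/integration-by-parts argument that the paper has in mind: the paper does not spell out a proof of Lemma~\ref{adjoint24052018} but only writes ``Similar as in \cite{coron}, we determine $F_T^\ast$ in the following lemma,'' and your Green-type computation (differentiate $\int_0^L r^\top z\,dx$, use the two PDEs so that the $\eta_0$-terms cancel and the rest collapses to $-\partial_x(r^\top D z)$, then insert the four boundary conditions) is exactly that argument. Your remarks on well-posedness of the adjoint system via time reversal and on the density/approximation step are appropriate and complete the proof.
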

For our proof of the turnpike result,
the fact that the operator norm
of $F_T$ is uniformly bounded with
respect to $T$ is essential.
Moreover, it is important that with
boundary controls that are zero, the system state
decays exponentially with time
for the forward system in the forward direction
and for the adjoint system with time going backwards.

\subsection{Necessary optimality conditions for the dynamic problem}
\label{necessaryoptimalityconditions}
In order to determine the structure of the  dynamic optimal control
 $ u^{(\updelta,\, T)}$ that solves
 (\ref{ocplambda})
we look at the necessary optimality conditions.
For all $u,\, r\in H$
 that satisfy (\ref{linearizedsystem}), we have
\begin{equation}
\label{09122016a}
 J(u,\, r)
 =
 \int_0^T
 f_0(u_+(t),\, (F_T u)_-(t) \,)
 +
 f_L(u_-(t),\, (F_Tu)_+(t) \, )
 \,dt
.
\end{equation}
Let $u = u^{(\updelta,\, T)} + \delta^{(1)}$
with a control variation $\delta^{(1)}\in H$. 
Let $R = R^{(\updelta,\, T)} + \delta^{(2)}$
denote the  corresponding
state,
that is we have
\[
\left\{
\begin{array}{l}
\delta^{(2)}(0,\,x)=0,\, x\in (0,L),
\\
\delta^{(2)}_t + D\, \delta^{(2)}_x = \eta_0\, M\,\delta^{(2)},
\\
\delta^{(2)}_+(t,\,0)= \delta^{(1)}_+(t),
\\
\delta^{(2)}_-(t,\, L)= \delta^{(1)}_-(t),
\end{array}
\right.
\]
or
$F_T\,(\delta^{(1)})
=
\left(
\delta^{(2)}_+(\cdot,\,L),
%
\delta^{(2)}_-(\cdot,\,0)
\right)^\top.$
%
Since
$J$ is convex and we have
%
%
\begin{eqnarray*}
& &
 J(u^{(\updelta,\,T)} + \delta^{(1)},\,
R^{(\updelta,\,T)} + \delta^{(2)})
\geq J(u^{(\updelta,\,T)},\, R^{(\updelta,\,T)})
\\
& + &
\langle  A_0 \left(
 \begin{array}{c}
 u_+^{(\updelta,\,T)}
 \\
  (F_T u^{(\updelta,\,T)})_-
  \end{array}
  \right) + c_0,\, \left(
  \begin{array}{c}
  \delta^{(1)}_+
  \\
   (F_T\delta^{(1)})_-
   \end{array}
   \right) \rangle_{H}
+
\langle
 A_L
 \left(
 \begin{array}{c}
  u_-^{(\updelta,\,T)}
  \\
   (F_T u^{(\updelta,\,T)})_+
   \end{array}
   \right) + c_L,\,
   \left(
 \begin{array}{c}
 \delta^{(1)}_-
 \\
  (F_T\delta^{(1)})_+
   \end{array}
   \right)
   \rangle_{H}
.
\end{eqnarray*}
Define the vectors $v_1=(c_{0,1},\, c_{L,1})^\top$,
$v_2 = (c_{L,\,2},\, c_{0,\,2})^\top$ where $c_0=(c_{0,1}, \, c_{0, 2})^\top$
and $c_L=(c_{L,1}, \, c_{L, 2})^\top$.
Then we have
\[
\langle
c_0,\, (\delta^{(1)}_+,\, (F_T\delta^{(1)})_- )^\top \rangle_{H}+  \langle c_L,\, (\delta^{(1)}_-,\, (F_T\delta^{(1)})_+ )^\top\rangle_{H}
=
\langle
v_1 + F_T^\ast v_2, \, \delta^{(1)}\rangle_{H}
.
\]
Define 
$
{\cal M}_1 = \left( \begin{array}{ll} a^0_{11} & 0 \\ 0 & a^L_{11} \end{array} \right)
$,
$
{\cal M}_2 = \left( \begin{array}{ll} 0 & a^0_{12}  \\  a^L_{12} & 0 \end{array} \right)$,
${\cal M}_3 = \left( \begin{array}{ll} 0 & a^L_{12}  \\  a^0_{12} & 0 \end{array} \right)$,
${\cal M}_4 = \left( \begin{array}{ll} a^L_{22} & 0 \\ 0 & a^0_{22} \end{array} \right)$
where $A_0 = \left( \begin{array}{ll} a^0_{11} & a^0_{12} \\ a^0_{12} & a^0_{22} \end{array} \right)$,
$A_L = \left( \begin{array}{ll} a^L_{11} & a^L_{12} \\ a^L_{12} & a^L_{22} \end{array} \right)$.
Then we have
\[
 J(u^{(\updelta,\,T)} + \delta^{(1)},\,
R^{(\updelta,\,T)} + \delta^{(2)})
\geq
J(u^{(\updelta,\,T)},\, R^{(\updelta,\,T)})
 \]
 \[
+
\langle
{\cal M}_1 u^{(\updelta,\,T)} + {\cal M}_2 F_Tu^{(\updelta,\,T)} + v_1  + F_T^\ast\left({\cal M}_3 u^{(\updelta,\,T)} + {\cal M}_4 F_Tu^{(\updelta,\,T)} + v_2\right),\delta^{(1)})
\rangle_{H}
.
\]

This implies the  optimality conditions that are stated in the following lemma.
Due to the convexity of the problem, they are necessary and sufficient
(see also the Lagrange multiplier rule, as for example in  \cite{kunisch}).
\begin{lemma}
\label{neccessaryoptimalityconditionsdynamic}
The control  $u^{(\updelta,\,T)}$ is a solution of the dynamic optimal control problem
(\ref{ocplambda})
if and only if the optimality system
\begin{equation}
\label{optimalitysystem}
{\cal M}_1 u^{(\updelta,\,T)} + {\cal M}_2 F_Tu^{(\updelta,\,T)} + v_1  + F_T^\ast\left({\cal M}_3 u^{(\updelta,\,T)} + {\cal M}_4 F_Tu^{(\updelta,\,T)} + v_2\right)
%
=
0
\end{equation}
holds.
By the definition of $F_T$ and
the representation of $F_T^\ast$ from Lemma \ref{adjoint24052018},
this means that there exists a multiplier $p^{(\updelta,\,T)}$
such that
for $(t,\, x) \in (0,\, T) \times (0,\, L)$ almost everywhere
we have
\begin{equation}
\label{39a}
\left\{
\begin{array}{l}
R^{(\updelta,\,T)}(0,\,x)=0,\, 
\\
R_t^{(\updelta,\,T)} + D\, R_x^{(\updelta,\,T)} = \eta_0\, M\, R^{(\updelta,\,T)},
\\
R_+^{(\updelta,\,T)}(t,\,0)= u_+^{(\updelta,\,T)}(t),
\\
R_-^{(\updelta,\,T)}(t,\, L)= u_-^{(\updelta,\,T)}(t),
\\
p^{(\updelta,\,T)}(T,\,x)=0,\, 
\\
p_t^{(\updelta,\,T)} + D\, p_x^{(\updelta,\,T)} = - \eta_0\,M^\top\,  p^{(\updelta,\,T)}  - D'\, p^{(\updelta,\,T)} ,
\\
p_+^{(\updelta,\,T)}(t,\, L) =   \frac{1}{d_+(L) }\, \left(
{\cal M}_3 u^{(\updelta,\,T)} + {\cal M}_4
 \left(
 \begin{array}{c}
R^{(\updelta,\,T)}_+(t,\,L)
\\
 R^{(\updelta,\,T)}_-(t,\,0)
 \end{array}
 \right)  + v_2
\right)_+,
\\
p_-^{(\updelta,\,T)}(t,\,0)= \frac{1}{|d_-(0)|} \,  \left(
{\cal M}_3 u^{(\updelta,\,T)} + {\cal M}_4
 \left(
 \begin{array}{c}
R^{(\updelta,\,T)}_+(t,\,L)
\\
 R^{(\updelta,\,T)}_-(t,\,0)
 \end{array}
 \right)
   + v_2
\right)_-
\end{array}
\right.
\end{equation}
and
\begin{equation}
\label{couplinga}
\left\{
\begin{array}{l}
\left(
{\cal M}_1 u^{(\updelta,\,T)} + {\cal M}_2 F_Tu^{(\updelta,\,T)} + v_1
\right)_+
+
d_+(0) \,  p_+^{(\updelta,\,T)}(t,\, 0) = 0,
\\
\left(
{\cal M}_1 u^{(\updelta,\,T)} + {\cal M}_2 F_Tu^{(\updelta,\,T)} + v_1
\right)_-
+
|d_-(L)|\,   p_-^{(\updelta,\,T)}(t,\, L) = 0.
\end{array}
\right.
\end{equation}
\end{lemma}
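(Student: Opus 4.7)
The proof essentially packages the variational inequality already derived in the paragraph just before the lemma into the claimed pair of PDE systems. First I would observe that since the state map $u \mapsto R$ defined via (\ref{linearizedsystem}) is affine and linear, and since $J$ is the sum of two strictly convex quadratics $f_0, f_L$ composed with affine maps of $u$, the reduced cost $u \mapsto J(u, r(u))$ is a convex quadratic functional on $H$. Consequently, the first-order stationarity condition obtained from the displayed inequality preceding the lemma is both necessary and sufficient for optimality, and it remains only to turn that inequality into an equation.

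Setting $\delta^{(1)} = \varepsilon\,\phi$ for arbitrary $\phi\in H$ and arbitrary $\varepsilon\in\mathbb{R}$, the displayed lower bound forces
\[
\langle {\cal M}_1 u^{(\updelta,T)} + {\cal M}_2 F_T u^{(\updelta,T)} + v_1 + F_T^\ast\bigl({\cal M}_3 u^{(\updelta,T)} + {\cal M}_4 F_T u^{(\updelta,T)} + v_2\bigr),\,\phi \rangle_H = 0
\]
for every $\phi\in H$, which gives (\ref{optimalitysystem}). This is the variational form of the optimality condition and, by the convexity remark above, characterizes solutions of (\ref{ocplambda}).

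To rewrite (\ref{optimalitysystem}) in the classical forward-backward form, I would introduce
\[
z_T := {\cal M}_3 u^{(\updelta,T)} + {\cal M}_4 F_T u^{(\updelta,T)} + v_2 \in H,
\]
and define $p^{(\updelta,T)}$ to be the solution of the adjoint problem of Lemma \ref{adjoint24052018} with terminal data $z_T$. By construction, $p^{(\updelta,T)}$ satisfies the last four lines of (\ref{39a}): the terminal condition, the backward transport equation, and the prescribed values of $p_+$ at $x=L$ and $p_-$ at $x=0$. The first four lines of (\ref{39a}) are merely the primal system (\ref{linearizedsystem}). Substituting the explicit representation
\[
F_T^\ast(z_T)(t) = \bigl(d_+(0)\,p_+^{(\updelta,T)}(t,0),\;|d_-(L)|\,p_-^{(\updelta,T)}(t,L)\bigr)^\top
\]
from Lemma \ref{adjoint24052018} into (\ref{optimalitysystem}) and reading the two components separately yields exactly (\ref{couplinga}).

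The only nontrivial obstacle, besides invoking convexity for sufficiency, is bookkeeping: keeping track of which component of the $\mathbb{R}^2$-valued quantities in $F_T$ and $F_T^\ast$ corresponds to the end $x=0$ and which to $x=L$, and ensuring the factors $d_+(0)$, $|d_-(L)|$, $d_+(L)$, $|d_-(0)|$ end up on the correct sides. This is precisely why the auxiliary matrices ${\cal M}_1,\ldots,{\cal M}_4$ and vectors $v_1, v_2$ are introduced beforehand; once the substitution is made, the identification of (\ref{couplinga}) is a direct index chase and no further analytic work is required.
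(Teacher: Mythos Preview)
Your proposal is correct and follows essentially the same approach as the paper: the paper derives the variational inequality in the paragraph preceding the lemma, invokes convexity (citing \cite{kunisch}) to upgrade necessity to sufficiency, and then simply states that unpacking $F_T^\ast$ via Lemma~\ref{adjoint24052018} yields (\ref{39a}) and (\ref{couplinga}). You spell out the passage from inequality to equation via $\delta^{(1)}=\varepsilon\phi$ and the explicit construction of the adjoint state a bit more carefully than the paper does, but there is no substantive difference in method.
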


\subsection{An adjoint operator for the static problem}
\label{adjointstatic}
We define the static operator
$
F_{(\upsigma)}\, (u^{(\upsigma)} )
$
that maps the boundary control $u^{(\upsigma)} = (u_+^{(\upsigma)},\, u_-^{(\upsigma)} )\in {\mathbb R}^2$ to
the point
$(r_+^{(\upsigma)}(L),\, r_-^{(\upsigma)}(0) ) $,
where $r^{(\upsigma)}$ solves
 the linear boundary value problem
(for  $x\in (0,L)$)
\begin{equation}
\label{linearizedsystemstatic}
\left\{
\begin{array}{l}
r_x^{(\upsigma)} = \eta_0\, D^{-1}\,M\, r^{(\upsigma)},
\\
r_+^{(\upsigma)}(0)= u_+^{(\upsigma)},
\\
r_-^{(\upsigma)}(L)= u_-^{(\upsigma)}.
\end{array}
\right.
\end{equation}
Thus we have
\begin{equation}
F_{(\upsigma)}\,\left(
\begin{array}{r}
u_+^{(\upsigma)}
\\
 u_-^{(\upsigma)}
 \end{array}
\right)
=
\left(
\begin{array}{r}
r_+^{(\upsigma)}(L)
\\
r_-^{(\upsigma)}(0)
 \end{array}
\right).
\end{equation}

In  Lemma \ref{adjointstaticlemma} an explicit representation of
the adjoint operator  $F_{(\upsigma)}^\ast$  is given
that satisfies  for all $z \in {\mathbb R}^2$ the equation
$
\langle F_{(\upsigma)}\,(u^{(\upsigma)}),
\,
z \rangle_{\mathbb R^2}
=
\langle
u^{(\upsigma)},
\,
F_{(\upsigma)}^\ast(z) \rangle_{\mathbb R^2}$.

\begin{lemma}
\label{adjointstaticlemma}
For  $z =(z_+,\, z_-)^T\in {\mathbb R}^2$,
define $( z_+^{(\upsigma)}(\cdot),\, z_-^{(\upsigma)}(\cdot))  \in (L^{2}(0,\,L))^{2} $
as the solution of the adjoint system
\begin{equation}
\label{adjointsystemstatic}
\left\{
\begin{array}{l}
z_x^{(\upsigma)} = - \eta_0\, D^{-1}\,M^\top\, z^{(\upsigma)} -  D^{-1}\,D'\, z^{(\upsigma)},
\\
z_+^{(\upsigma)}(L)=  \frac{1}{d_+(L)}\,z_+,
\\
z_-^{(\upsigma)}(0)=   \frac{1}{|d_-(0)|}\,   z_-.
\end{array}
\right.
\end{equation}
Then we have
\begin{equation}
\label{08122016}
F_{(\upsigma)}^\ast\,\left(
\begin{array}{r}
z_+
\\
  z_-
 \end{array}
\right)
=
\left(
\begin{array}{r}
d_+(0) \, z_+^{(\upsigma)}(0)
\\
|d_-(L)|\, z_-^{(\upsigma)}(L)
 \end{array}
\right).
\end{equation}
\end{lemma}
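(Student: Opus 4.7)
The plan is to verify the adjoint identity $\langle F_{(\upsigma)}(u^{(\upsigma)}),z\rangle_{\mathbb R^2}=\langle u^{(\upsigma)},F_{(\upsigma)}^\ast(z)\rangle_{\mathbb R^2}$ by a duality calculation on the interval $[0,L]$ using the forward system (\ref{linearizedsystemstatic}) and the adjoint system (\ref{adjointsystemstatic}). The central trick is to identify a conserved bilinear quantity in the space variable, analogous to the space--time duality used in Lemma \ref{adjoint24052018} but now for the stationary ODE setting.

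First I would introduce the scalar function
\begin{equation*}
\Phi(x) \;=\; \bigl(z^{(\upsigma)}(x)\bigr)^\top D(x)\, r^{(\upsigma)}(x),\qquad x\in[0,L],
\end{equation*}
and differentiate it. Using the product rule together with the ODE $r_x^{(\upsigma)} = \eta_0 D^{-1} M r^{(\upsigma)}$ from (\ref{linearizedsystemstatic}) and the adjoint ODE $z_x^{(\upsigma)} = -\eta_0 D^{-1} M^\top z^{(\upsigma)} - D^{-1} D' z^{(\upsigma)}$ from (\ref{adjointsystemstatic}), and exploiting that $D$, $D'$ and $D^{-1}$ are diagonal and therefore mutually commuting, a short computation shows that the four contributions cancel in pairs and $\Phi'(x)\equiv 0$. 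Consequently $\Phi(0)=\Phi(L)$.

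Next I would expand both endpoint values using the diagonal form of $D$ and plug in the four boundary conditions, namely $r_+^{(\upsigma)}(0)=u_+^{(\upsigma)}$, $r_-^{(\upsigma)}(L)=u_-^{(\upsigma)}$, $z_+^{(\upsigma)}(L)=z_+/d_+(L)$, and $z_-^{(\upsigma)}(0)=z_-/|d_-(0)|$. Rearranging the identity $\Phi(0)=\Phi(L)$ so that all terms carrying $u_\pm^{(\upsigma)}$ are on the left and all terms carrying $z_\pm$ on the right produces exactly
\begin{equation*}
d_+(0)\,z_+^{(\upsigma)}(0)\,u_+^{(\upsigma)} + |d_-(L)|\,z_-^{(\upsigma)}(L)\,u_-^{(\upsigma)}
\;=\; z_+\, r_+^{(\upsigma)}(L) + z_-\, r_-^{(\upsigma)}(0),
\end{equation*}
which is precisely $\langle u^{(\upsigma)},F_{(\upsigma)}^\ast(z)\rangle_{\mathbb R^2}=\langle F_{(\upsigma)}(u^{(\upsigma)}),z\rangle_{\mathbb R^2}$ with $F_{(\upsigma)}^\ast$ given by (\ref{08122016}).

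The only non-routine step is book-keeping with signs: because $d_-(x)<0$ on $[0,L]$, the factors $d_-(0)/|d_-(0)|=-1$ and $d_-(L)/|d_-(L)|=-1$ appear when the $-$ components are evaluated at the endpoints. These signs are precisely what forces the absolute values in the adjoint boundary conditions (\ref{adjointsystemstatic}), and they are what makes the two unwanted cross terms involving $z_-\, r_-^{(\upsigma)}(0)$ and $z_+\,r_+^{(\upsigma)}(L)$ move to the correct side of the equation with the right signs. Once these signs are tracked carefully, no further difficulty arises, and the identity follows immediately from the conservation of $\Phi$.
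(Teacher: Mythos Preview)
Your argument is correct and is exactly the standard duality computation one expects here. The paper does not spell out a proof of Lemma~\ref{adjointstaticlemma}; it only states the result, having earlier remarked (for the dynamic analogue in Lemma~\ref{adjoint24052018}) that the representation is obtained ``similar as in \cite{coron}''. Your conserved quantity $\Phi(x)=(z^{(\upsigma)}(x))^\top D(x)\,r^{(\upsigma)}(x)$ is precisely the static version of that integration-by-parts identity, and your sign bookkeeping for the $d_-$ terms is accurate.
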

\subsection{Necessary optimality conditions for the static
problem}
\label{necessaryoptimalityconditionsstatic}
Let $ u^{(\upsigma)}$ denote the optimal control
that solves (\ref{ocplambdastatic})
and $R^{(\upsigma)}$ the  state
generated by $ u^{(\upsigma)}$
as a solution of
(\ref{staticlinearizedsystem}).
For all $u\in {\mathbb R}^2$ and $R\in ( L^{2}(0,\, L))^2$
that satisfy
(\ref{staticlinearizedsystem})
 we have
 \[
J_0(u,\, R)
=
 f_0(u_+,\, F_{(\upsigma)} (u)_- )
 +
 f_L(u_-,\, F_{(\upsigma)} (u)_+  )
%
.
\]

Let $u = u^{(\upsigma)} + \delta^{(1)}$
with a control variation $\delta^{(1)}\in \mathbb R^2$.
Let $R = R^{(\upsigma)} + \delta^{(2)}$
with a state variation $\delta^{(2)}$
denote the  corresponding  state,
that is we have
\[
\left\{
\begin{array}{l} D\, \delta^{(2)}_x = \eta_0\, M\,\delta^{(2)},
\\
\delta^{(2)}_+(0)= \delta^{(1)}_+,
\\
\delta^{(2)}_-( L)= \delta^{(1)}_-,
\end{array}
\right.
\]
or
$\left(
\begin{array}{c}
\delta^{(2)}_+(L)
\\
\delta^{(2)}_-(0)
\end{array}
\right)
 = F_{(\upsigma)}\,(\delta^{(1)})$.
For all $p \in \mathbb R^2$ we have
\[
 J_0(
 u^{(\upsigma)} + \delta^{(1)},\,
R^{(\upsigma)} + \delta^{(2)}
 )
\geq
J_0(u^{(\upsigma)},\, R^{(\upsigma)})
 \]
 \[
+
\langle
{\cal M}_1 u^{(\upsigma)} + {\cal M}_2 F_{(\upsigma)}    u^{(\upsigma)} + v_1
+ F_{(\upsigma)} ^\ast
\left( {\cal M}_3 u^{(\upsigma)}  + {\cal M}_4 F_{(\upsigma)}  u^{(\upsigma)} + v_2\right),
\,\delta^{(1)})
\rangle_{\mathbb R^2}
.
\]
%
%
%
Hence $u^{(\upsigma)}$ can only be a {\em static} optimal control
if  the optimality system
\begin{equation}
\label{optimalitysystemstatic}
{\cal M}_1 u^{(\upsigma)} + {\cal M}_2 F_{(\upsigma)}    u^{(\upsigma)} + v_1  + F_{(\upsigma)} ^\ast\left({\cal M}_3 u^{(\upsigma)}
 + {\cal M}_4 F_{(\upsigma)}  u^{(\upsigma)} + v_2\right)
%
=
0
\end{equation}
holds.
By the definition of $F_{(\upsigma)}$ and
the representation of $F_{(\upsigma)}^\ast$ from Lemma \ref{adjointstaticlemma},
this means that there exists a multiplier $P^{(\upsigma)}$
such that we have
\begin{equation}
\label{39astatic14}
\left\{
\begin{array}{l}
D\, R_x^{(\upsigma)} = \eta_0\, M\, R^{(\upsigma)},
\\
R_+^{(\upsigma)}(0)= u_+^{(\upsigma)},
\\
R_-^{(\upsigma)}(L)= u_-^{(\upsigma)},
\\
D\, P_x^{(\upsigma)} = - \eta_0\,M^\top\,  P^{(\upsigma)}  -  D'\,P^{(\upsigma)} ,
\\
P_+^{(\upsigma)}( L)=   \frac{1}{d_+(L)}\, \left(
 {\cal M}_3 u^{(\upsigma)}  + {\cal M}_4 F_{(\upsigma)}  u^{(\upsigma)} + v_2
\right)_+,
\\
P_-^{(\upsigma)}(0)= \frac{1}{|d_-(0)|} \,  \left(
 {\cal M}_3 u^{(\upsigma)}  + {\cal M}_4 F_{(\upsigma)}  u^{(\upsigma)} + v_2
\right)_-
\end{array}
\right.
\end{equation}
and
\begin{equation}
\label{39astatic14a}
\left\{
\begin{array}{l}
\left(
{\cal M}_1 u^{(\upsigma)} + {\cal M}_2 F_{(\upsigma)}    u^{(\upsigma)} + v_1
\right)_+
+ d_+(0) \,  P_+^{(\upsigma)}(0) = 0,
\\
\left(
{\cal M}_1 u^{(\upsigma)} + {\cal M}_2 F_{(\upsigma)}    u^{(\upsigma)} + v_1
\right)_-
+
|d_-(L)|\,   P_-^{(\upsigma)}(L) = 0.
\end{array}
\right.
\end{equation}

\subsection{The static optimal control is close to optimal for the dynamic optimal control problem}
\label{turnpikemainresult}
The proof of Theorem~\ref{satz1} uses the following two auxiliary results.

\begin{lemma}
\label{lemma2017}
Let $t\geq 0$ be given. For $ y_0^{(t)}(x)\in L^2(0,\,L)\times L^2(0,L)$, consider the
initial boundary value problem (\ref{adjointsystem02062017})
for $x\in (0,L)$ and $s \in [t-1,\, t]$:
  \begin{equation}
\label{adjointsystem02062017}
\left\{
\begin{array}{l}
f(t,\,x) =  y_0^{(t)}(x),
\\
f_t(s,\,x) + D\, f_x(s,x)  = - \eta_0\,M^\top\, f(s,\,x) - D' \, f(s,\,x) ,
\\
f_+(s,\, L)=  0,
\\
f_-(s,\,0) =  0.
\end{array}
\right.
\end{equation}
There exists a constant
$\tilde C_2\geq 0$ such that
we have the inequality
\begin{equation}
\label{02062017a}
\int_{t-1}^t |d_-(L)| \, f_-(s,\,L)^2 + d_+(0) \, f_+(s,\,0)^2 \,ds
\leq
(1 + \tilde C_2)
\int_0^L \left\|y_0^{(t)}(x)\right\|_{{\mathbb R}^2}^2 \, dx.
\end{equation}
Now consider the
initial boundary value problem (\ref{adjointsystem08062017})
for $x\in (0,L)$ and $s \in [t,\, t+1]$:
  \begin{equation}
\label{adjointsystem08062017}
\left\{
\begin{array}{l}
g(t,\, x) =  y_0^{(t)}(x),
\\
g_t(s,\,x) + D\, g_x(s,\,x)  =  \eta_0\,M\, g(s,\,x) ,
\\
g_+(s,\, 0)=  0,
\\
g_-(s,\,L) =  0.
\end{array}
\right.
\end{equation}
There exists a constant
$\tilde C_3\geq 0$ such that
we have the inequality
\begin{equation}
\label{08062017aa}
\int_{t}^{t+1} |d_-(0)| \, g_-(s,\,0)^2 + d_+(L) \, g_+(s,\,L)^2 \,ds
\leq
(1 + \tilde C_3)
\int_0^L \left\|y_0^{(t)}(x)\right\|_{{\mathbb R}^2}^2 \, dx.
\end{equation}

\end{lemma}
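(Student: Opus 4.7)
Both estimates follow from the same weighted Lyapunov idea used in Lemma~\ref{exponentialdecayconservationlemma19042018}: one replaces the $L^2$ norm by $V(s)=\int_0^L h(s,x)^\top E(x)\, h(s,x)\,dx$ with $E$ as in (\ref{edefinition2018}), differentiates in $s$, integrates by parts in $x$, and reads the boundary traces off the surface term. In the setting of Lemma~\ref{exponentialdecayconservationlemma19042018} the surface term acted as a \emph{source} (the data $u_\pm$ were nonzero); here the situation is dual — the incoming characteristics carry zero data, so the surface term reduces to exactly the outgoing traces that we want to estimate, and they carry a fixed sign in the Lyapunov identity.

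For (\ref{08062017aa}), I would take $V(s)=\int_0^L g^\top E(x)g\,dx$ with $\mu_+,\mu_->0$ chosen as in (\ref{corondelta}). Since $g_+(s,0)=0$ and $g_-(s,L)=0$, integration by parts gives
\[
V'(s)=-e^{-\mu_+ L}d_+(L)\,g_+(s,L)^2-|d_-(0)|\,g_-(s,0)^2+\int_0^L g^\top\bigl[(ED)'-|\eta_0|(EM+M^\top E)\bigr]g\,dx.
\]
By (\ref{corondelta}) the interior integral is $\le \nu_a V(s)\le 0$, so integrating over $[t,t+1]$ and discarding $V(t+1)\ge 0$ yields $\int_t^{t+1}\bigl(e^{-\mu_+L}d_+(L)g_+(s,L)^2+|d_-(0)|g_-(s,0)^2\bigr)ds\le V(t)$. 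Absorbing $e^{-\mu_+L}$ into a constant and bounding $V(t)\le e^{\mu_-L}\int_0^L\|y_0^{(t)}\|^2\,dx$ gives (\ref{08062017aa}) with $1+\tilde C_3=e^{(\mu_++\mu_-)L}$.

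For (\ref{02062017a}) I would run the analogous computation on the backward-in-time system. With the same ansatz $V(s)=\int_0^L f^\top E(x)f\,dx$ and the pde $f_s=-Df_x-\eta_0 M^\top f-D'f$, integration by parts produces
\[
V'(s)=|d_-(L)|e^{\mu_- L}\,f_-(s,L)^2+d_+(0)\,f_+(s,0)^2+\int_0^L f^\top\bigl[E'D-ED'+|\eta_0|(EM^\top+ME)\bigr]f\,dx,
\]
where I use $f_+(s,L)=0$, $f_-(s,0)=0$ to reduce the surface contribution. Choosing $\mu_+,\mu_-<0$ as in (\ref{corondeltb}), the interior integral is now $\ge\nu_0 V(s)\ge 0$; hence $V'(s)\ge$ (the positive boundary term), and integrating from $s=t-1$ up to $s=t$ gives $\int_{t-1}^{t}[\cdots]\,ds\le V(t)-V(t-1)\le V(t)\le e^{|\mu_+|L}\int_0^L\|y_0^{(t)}\|^2 dx$. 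Absorbing $e^{\mu_- L}$ (which is $\le1$ here, so going to the unweighted trace costs a factor $e^{|\mu_-|L}$) delivers (\ref{02062017a}) with $1+\tilde C_2=e^{(|\mu_+|+|\mu_-|)L}$.

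The only real issue is the matching of signs: for the forward system one wants the surface term to contribute negatively so the outgoing flux is controlled by the decay of $V$, while for the backward system one wants exactly the opposite, the surface term contributing positively so the outgoing flux is controlled by the growth of $V$ along the reversed time direction. This is arranged by using (\ref{corondelta}) respectively (\ref{corondeltb}), which supply the $\mu_\pm$ of the needed sign and simultaneously give a one-sided bound on the interior quadratic form. Apart from this bookkeeping the proof is a direct copy of the calculation carried out in Lemma~\ref{exponentialdecayconservationlemma19042018}, and no new analytic ingredient is required.
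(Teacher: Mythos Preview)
Your argument is correct, but it is not the route the paper takes for this particular lemma. The paper works with the \emph{unweighted} energy $\tfrac12\int_0^L f^\top f\,dx$, so the interior term $\int f^\top(|\eta_0|M^\top-\tfrac12 D')f\,dx$ is not sign-definite; instead of arranging a sign via exponential weights, the authors invoke Theorem~A.4 of \cite{BastinCoron2016} to bound the space--time integral of this term below by $-\tfrac{\tilde C_2}{2}\|y_0^{(t)}\|^2$, and then the energy identity plus the terminal condition $f(t,\cdot)=y_0^{(t)}$ gives (\ref{02062017a}) directly. Your approach, by contrast, chooses the weighted functional $V(s)=\int f^\top E f\,dx$ and uses the structural hypotheses (\ref{corondelta})/(\ref{corondeltb}) to make the interior quadratic form one-sided, so nothing external is needed. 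This is exactly the mechanism the paper employs later in Lemma~\ref{lemma2016} for the exponential decay of $E_0$ and $E_1$, so in a sense you have recognised that the same device already suffices here. The trade-off: the paper's proof of Lemma~\ref{lemma2017} is lighter (no weights, no choice of $\mu_\pm$) but outsources the estimate to an external well-posedness result; your proof is self-contained and yields an explicit constant $1+\tilde C=e^{(|\mu_+|+|\mu_-|)L}$, at the cost of invoking (\ref{corondelta})/(\ref{corondeltb}) one more time. One small caveat: for the backward system the interior matrix is $E'D-ED'+|\eta_0|(EM^\top+ME)$, which is literally (\ref{corondeltb}) only when $M=M^\top$; the paper has the same mismatch in the proof of Lemma~\ref{lemma2016}, so this is a feature of the standing assumptions rather than a flaw in your argument.
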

\begin{proof}
Theorem A.4 from \cite{BastinCoron2016} implies that
there exists a constant
$\tilde C_2\geq 0$ such that
\[
 \int_{t-1}^{t}
\int_0^L
f^\top
\left(  |\eta_0|\,  \, M^\top
- \frac{1}{2} D' \right) f
 dx \, d\tau
  \geq
 - \frac{\tilde C_2}{2} \,
\left\|  y_0^{(t)} \right\|^2_{(L^2(0,\, L))^2}
.
\]
From (\ref{adjointsystem02062017})
we obtain the equation
$
f^\top\,f_t + f^\top \,D\, f_x = -\eta_0\, f^\top \, M^\top  \, f - f^\top\, D' \, f,
$
hence
\[\partial_t \left( \frac{1}{2} f^\top f\right)
= - \partial_x \left( \frac{1}{2} f^\top \, D\,  f\right)
-  \eta_0\, f^\top \, M^\top \, f
- \frac{1}{2}
f^\top \, D'\, f.
\]
 This implies
\[\partial_t \left( \frac{1}{2} f^\top f\right)
 \geq - \partial_x \left( \frac{1}{2} f^\top \, D\,  f\right)
 +
 f^\top
\left(  |\eta_0|\,  \, M^\top
- \frac{1}{2} D' \right) f.
\]
Integration with respect to the space variable $x$ yields
\begin{eqnarray*}
& &
\partial_t \int_0^L \left( \frac{1}{2} f^\top f\right)\,dx
\\
&  \geq &
 -  \left. \left( \frac{1}{2} f^\top \, D\,  f\right)\right|_{x=0}^L
 +
\int_0^L
 f^\top
\left(  |\eta_0|\,  \, M^\top
- \frac{1}{2} D' \right) f \, dx
 \\
&  = & - \frac{1}{2}  \left[
 d_-(L) \, f_-(s,\,L)^2   - d_+(0) \,   f_+(s,\,0)^2
    \right]
 +
\int_0^L
 f^\top
\left(  |\eta_0|\,  \, M^\top
- \frac{1}{2} D' \right) f \, dx
    .
\end{eqnarray*}
Due to the boundary condition in (\ref{adjointsystem02062017}),
integration with respect to
time
 yields
\begin{eqnarray*}
& &\int_{0}^L  \frac{1}{2} \|f(t,\, x)\|^2_{\mathbb R^2}\,dx
-
\int_{0}^L  \frac{1}{2} \|f(t-1,\, x)\|^2_{\mathbb R^2}\,dx
\\
& \geq &
\frac{1}{2} \, d_+(0) \, \int_{t-1}^t  f_+(s,0)^2\, d\tau
+
\frac{1}{2} \, |d_-(L)| \, \int_{t-1}^t f_-(s,L)^2\, d\tau
    +
\int_{t-1}^t
\int_0^L
 f^\top
\left(  |\eta_0|\,  \, M^\top
- \frac{1}{2} D' \right) f \, dx
\, d\tau
\\
&
\geq &
\frac{1}{2} \, d_+(0) \, \int_{t-1}^t  f_+(s,0)^2\, d\tau
+
\frac{1}{2} \, |d_-(L)| \, \int_{t-1}^t f_-(s,L)^2\, d\tau
    - \frac{\tilde C_2}{2}  \,
\left\|  y_0^{(t)} \right\|^2_{(L^2(0,\, L))^2}
.
\end{eqnarray*}
Due to the terminal condition in (\ref{adjointsystem02062017}),
this implies
(\ref{02062017a}).
%
The proof of (\ref{08062017aa}) is similar.
%
\end{proof}

With Lemma \ref{lemma2017} we can prove
Lemma \ref{lemma2016}.
\begin{lemma}
\label{lemma2016}
Let $u^{(\upsigma)} \in {\mathbb R}^2$
be a static optimal control
that solves (\ref{ocplambdastatic}).
There exists a constant $C_D>0$ that is independent of $T$
such that for all $T>0$
and with  $u^{(\upsigma)}$
considered as a constant function in the Hilbert space $H$
we have
the inequality
\begin{equation}
\label{c4assertion}
\left\|
{\cal M}_1 u^{(\upsigma)} + {\cal M}_2 F_T u^{(\upsigma)} + v_1
+ F_T^\ast\left({\cal M}_3 u^{(\upsigma)} + {\cal M}_4 F_T u^{(\upsigma)} + v_2\right)
%
%
 \right\|_{H}
\leq
C_D.
\end{equation}
\end{lemma}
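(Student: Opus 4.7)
The plan is to use the \emph{static} optimality system (\ref{optimalitysystemstatic}) to exhibit the expression on the left of (\ref{c4assertion}) as a sum of three defect terms, each of which represents the mismatch between the static and dynamic operators $F_{(\upsigma)}$, $F_T$ (and their adjoints) applied to the (time-independent) inputs produced by $u^{(\upsigma)}$. Uniform boundedness in $T$ will then follow because each defect is an $L^2$-norm of the boundary trace of a forward, resp.\ backward, hyperbolic Cauchy problem with zero incoming data and only an initial, resp.\ terminal, transient, which is controlled by the exponential-decay estimates supplied by Lemma~\ref{exponentialdecayconservationlemma19042018} and Lemma~\ref{lemma2017}.

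Concretely, the first step is the algebraic rearrangement. Set $w := {\cal M}_3 u^{(\upsigma)} + {\cal M}_4 F_{(\upsigma)} u^{(\upsigma)} + v_2 \in \mathbb{R}^2$ and let $\Delta(t) := F_T u^{(\upsigma)}(t) - F_{(\upsigma)} u^{(\upsigma)}$, viewed as an element of $H$ (with $F_{(\upsigma)} u^{(\upsigma)}$ extended by the constant function). Inserting $F_T u^{(\upsigma)} = F_{(\upsigma)}u^{(\upsigma)} + \Delta$ and using (\ref{optimalitysystemstatic}) to replace ${\cal M}_1 u^{(\upsigma)} + {\cal M}_2 F_{(\upsigma)} u^{(\upsigma)} + v_1$ by $-F_{(\upsigma)}^\ast w$ gives
\[
{\cal M}_1 u^{(\upsigma)} + {\cal M}_2 F_T u^{(\upsigma)} + v_1 + F_T^\ast({\cal M}_3 u^{(\upsigma)} + {\cal M}_4 F_T u^{(\upsigma)} + v_2) = (F_T^\ast w - F_{(\upsigma)}^\ast w) + {\cal M}_2 \Delta + F_T^\ast {\cal M}_4 \Delta .
\]
By the uniform operator-norm bound $\|F_T^\ast\| \le C_N$ from (\ref{ftnormadjoint}), it suffices to show that $\|\Delta\|_H$ and $\|F_T^\ast w - F_{(\upsigma)}^\ast w\|_H$ are bounded by constants depending only on $u^{(\upsigma)}$, $w$, the coefficients of the system, and the decay rates $\nu_a$, $\nu_0$.

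For the bound on $\|\Delta\|_H$, I would observe that $\tilde r := r^{(T)} - R^{(\upsigma)}$, where $r^{(T)}$ is the dynamic state driven by the constant control $u^{(\upsigma)}$, solves the hyperbolic system with zero incoming boundary data $\tilde r_+(t,0)=\tilde r_-(t,L)=0$ and initial datum $\tilde r(0,\cdot) = -R^{(\upsigma)} \in (L^2(0,L))^2$. The outgoing traces $\tilde r_+(\cdot,L)$ and $\tilde r_-(\cdot,0)$ are exactly the two components of $\Delta$. Applying (\ref{08062017aa}) of Lemma~\ref{lemma2017} on each unit interval $[n,n+1]$ with $y_0^{(n)} = \tilde r(n,\cdot)$, then invoking the exponential decay of $\|\tilde r(n,\cdot)\|_{(L^2(0,L))^2}$ (which follows from the Lyapunov computation of Lemma~\ref{exponentialdecayconservationlemma19042018} with the boundary source $\xi \equiv 0$, yielding $E_a(n) \le E_a(0)\exp(\nu_a n)$ with $\nu_a<0$), and summing the resulting geometric series produces a bound $\|\Delta\|_H^2 \le C_1 \|R^{(\upsigma)}\|_{(L^2(0,L))^2}^2$ independent of $T$. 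The analogous argument for $F_T^\ast w - F_{(\upsigma)}^\ast w$ is performed backwards in time: writing $\tilde z := z - z^{(\upsigma)}$, where $z$, $z^{(\upsigma)}$ are the adjoint states from Lemmas~\ref{adjoint24052018} and~\ref{adjointstaticlemma} driven by the constant $w$, the function $\tilde z$ solves the adjoint system with zero incoming data $\tilde z_+(t,L)=\tilde z_-(t,0)=0$ and terminal datum $\tilde z(T,\cdot) = -z^{(\upsigma)}$; the components of $F_T^\ast w - F_{(\upsigma)}^\ast w$ are precisely $d_+(0)\,\tilde z_+(\cdot,0)$ and $|d_-(L)|\,\tilde z_-(\cdot,L)$, and these are bounded by iterating (\ref{02062017a}) on unit intervals $[T-n-1,T-n]$ together with the exponential \emph{backward} decay of $\|\tilde z(T-n,\cdot)\|_{(L^2(0,L))^2}$ guaranteed by assumption (\ref{corondeltb}).

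The main obstacle is the last point: verifying cleanly that the Lyapunov-type argument of Lemma~\ref{exponentialdecayconservationlemma19042018}, which is stated for the forward system, can be transported to the backward adjoint system so as to produce a summable geometric bound in time for $\|\tilde z(T-n,\cdot)\|$. Under the time change $\tau = T-t$, the adjoint becomes a forward hyperbolic system with diagonal matrix $-D$ (now having incoming characteristics at the opposite boundaries, consistent with $\tilde z_+(\cdot,L)=\tilde z_-(\cdot,0)=0$) and with lower-order term involving $M^\top$ and $D'$; the positivity condition (\ref{corondeltb}) plays exactly the role that (\ref{corondelta}) played in the forward Lyapunov computation, so an analogous inequality $\tilde E'(\tau) \le \nu_a' \tilde E(\tau)$ with $\nu_a' < 0$ can be derived for a suitable weighted functional. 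Once this is in place, combining the three norm bounds gives the claim with $C_D := \|{\cal M}_2\|\sqrt{C_1} + C_N\|{\cal M}_4\|\sqrt{C_1} + \sqrt{C_2}$, where $C_2$ is the adjoint counterpart of $C_1$.
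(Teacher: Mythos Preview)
Your proposal is correct and follows essentially the same route as the paper: the algebraic decomposition using (\ref{optimalitysystemstatic}) is identical (the paper's $p_0$ is your $w$, and $(F_T-F_{(\upsigma)})u^{(\upsigma)}$ is your $\Delta$), and both bounds are obtained by the same mechanism of exponential Lyapunov decay for the homogeneous forward/backward problems combined with the unit-interval trace estimates of Lemma~\ref{lemma2017} and a geometric-series summation. The only cosmetic difference is that the paper redoes the pointwise Lyapunov computations explicitly (defining functionals $E_0(t)$ and $E_1(t)$ and invoking (\ref{corondeltb}), (\ref{corondelta}) directly) rather than citing the proof of Lemma~\ref{exponentialdecayconservationlemma19042018}; your identification of the backward adjoint decay as the ``main obstacle'' is slightly overstated, since (\ref{corondeltb}) is tailored precisely for that purpose and the paper handles it without a time reversal.
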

\begin{proof}
Due to the optimality system (\ref{optimalitysystemstatic}) we have the equation
\begin{eqnarray*}
& &
{\cal M}_1 u^{(\upsigma)} + {\cal M}_2 F_T u^{(\upsigma)} + v_1
+ F_T^\ast\left({\cal M}_3 u^{(\upsigma)} + {\cal M}_4 F_T u^{(\upsigma)} + v_2\right)
\\
& = &
\left( {\cal M}_2
+
F_T^\ast  {\cal M}_4
\right)
 \left(F_T  - F_{(\upsigma)}\right) u^{(\upsigma)}
+
(F_T^\ast - F_{(\upsigma)}^\ast) \left( {\cal M}_3 u^{(\upsigma)}  +  {\cal M}_4 F_{(\upsigma)} u^{(\upsigma)}  + v_2 \right)
.
\end{eqnarray*}

Define
$
p_0 =
 {\cal M}_3 u^{(\upsigma)}  +  {\cal M}_4 F_{(\upsigma)} u^{(\upsigma)}  + v_2
%
$.
By definition, we have
\begin{equation}
\label{08062017a}
 \left( F_T^\ast - F_{(\upsigma)}^\ast\right) p_0
=\left(
\begin{array}{r}
d_+(0) \,\left( z_+(\cdot,\, 0) - z_+^{(\upsigma)}(0) \right)
\\
|d_-(L)|\, \left(  z_-(\cdot, L) -  z_-^{(\upsigma)}(L) \right)
 \end{array}
\right),
\end{equation}
where for $x\in (0,L)$ and $t\in (0,T)$ we have
  \begin{equation}
\label{adjointsystem1611}
\left\{
\begin{array}{l}
z(T,\,x) - z^{(\upsigma)}(x) =   - z^{(\upsigma)}(x),
\\
(z-z^{(\upsigma)})_t + D\, (z-z^{(\upsigma)}) _x = - \eta_0\,M^\top \, (z-z^{(\upsigma)})- D' \, (z-z^{(\upsigma)}),
\\
z_+(t,\, L) - z_+^{(\upsigma)}(L)=  0,
\\
z_-(t,\,0) - z_-^{(\upsigma)}(0) =  0,
\end{array}
\right.
\end{equation}
with $z^{(\upsigma)}(x) = F_{(\upsigma)}^\ast \,p_0$.
For real numbers $\mu_+$ and $\mu_-$ and $x\in [0,\, L]$,
let the matrix $E(x)$ be defined as in (\ref{edefinition2018}).
%
Similar as in \cite{BastinCoron2016},
for real numbers $\mu_+<0$ and $\mu_-<0$
consider the Lyapunov functional
\begin{equation}
\label{lyapunovdefinition}
E_0(t)
 =\frac{1}{2}
\int_0^L {\rm e}^{- \mu_+ \,x} \left( z_+(t,\, x) - z_+^{(\upsigma)}(x) \right)^2
+
{\rm e}^{ \,\mu_- \, \,x} \left( z_-(t,\, x) - z_-^{(\upsigma)}(x) \right)^2\, dx
.
\end{equation}
Using matrix and vector notation, we can write $E_0(t)$ in the form
\begin{equation*}
\label{lyapunovdefinition2017}
E_0(t)
 =\frac{1}{2}
\int_0^L \left( z(t,\, x) - z^{(\upsigma)}(x) \right)^\top
\,
E(x)
\,
\left( z(t,\, x) - z^{(\upsigma)}(x)  \right)
\, dx
.
\end{equation*}
For the time derivative of $E_0$
we obtain
\begin{eqnarray*}
E_0'(t) & = &
\int_0^L
\left( z(t,\, x) - z^{(\upsigma)}(x)  \right)^\top
\,
E(x)
\,
\left( z(t,\, x) - z^{(\upsigma)}(x)  \right)_t
\, dx.
\end{eqnarray*}
With (\ref{adjointsystem1611}) this yields
\begin{eqnarray*}
E_0'(t) & = &
\int_0^L
-\left( z(t,\, x) - z^{(\upsigma)}(x)  \right)^\top
\,
E(x)\,D(x)
\,
\left( z(t,\, x) - z^{(\upsigma)}(x)  \right)_x
\\
&
+
&
 \left( z(t,\, x) - z^{(\upsigma)}(x)  \right)^\top \, \left( |\eta_0| E(x) \, M(x)^\top - E(x) \,D'(x) \right)
\,\left( z(t,\, x) - z^{(\upsigma)}(x)  \right)
\,
dx.
\end{eqnarray*}

Integration by parts yields
\begin{eqnarray*}
E_0'(t) & = &
- \frac{1}{2}
\left( z(t,\, x) - z^{(\upsigma)}(x)  \right)^\top
\,
E(x)\,D(x)
\,
\left( z(t,\, x) - z^{(\upsigma)}(x)  \right)|_{x=0}^L
\\
& &
+
\int_0^L
\frac{1}{2}\,
\left( z(t,\, x) - z^{(\upsigma)}(x)  \right)^\top
\,
E'(x)\,D(x)
\,
\left( z(t,\, x) - z^{(\upsigma)}(x)  \right)\,dx
\\
& &
-
\int_0^L
\frac{1}{2}\,
\left( z(t,\, x) - z^{(\upsigma)}(x)  \right)^\top
\,
E(x)\,D'(x)
\,
\left( z(t,\, x) - z^{(\upsigma)}(x)  \right)\,dx
\\
& &
+
\int_0^L
 \left( z(t,\, x) - z^{(\upsigma)}(x)  \right)^\top \, |\eta_0| E(x) \, M(x)^\top
\,\left( z(t,\, x) - z^{(\upsigma)}(x)  \right)
\,
dx.
\end{eqnarray*}

 Due to the boundary conditions in
(\ref{adjointsystem1611}) the terms
that appear in $E_0'(t)$ and  only depend on  the boundary values vanish.
Define the symmetric matrix $M_0$ as
\begin{equation}
\label{M1definition}
M_0(x) = \frac{|\eta_0|}{2} \left[ E(x)\, M^\top(x)+ M(x) \, E(x) \right]  - \frac{1}{2} \, D'(x)\, E(x) .
\end{equation}
Then  we have
 \begin{eqnarray*}
  E_0'(t)
 & = &
\int_0^L
\left( z(t,\, x) - z^{(\upsigma)}(x)  \right)^\top
\,
\left( \frac{1}{2} E'(x)\,D(x)
+
M_0(x)
\right)
\,
\left( z(t,\, x) - z^{(\upsigma)}(x)  \right)
\, dx.
\end{eqnarray*}
%
 %
Choose $\mu_+<0$ and $\mu_-<0$
as in (\ref{corondeltb}).
Then we have
$E_0'(t) \geq \nu_0\, E_0(t)$.
For $H(t)$ defined as
$H(t) = E_0(T-t)
$
this yields
\begin{eqnarray*}
H'(t) & = & - E_0'(T-t) \leq - \nu_0 \, E_0(T-t)
= - \nu_0 \,H(t).
\end{eqnarray*}
Now Gronwall's inequality implies that $H(t)$ decays with the exponential rate $\nu_0$,
that is
for all $t\in (0,T)$ we have the inequality
$H(t) \leq H(0) \, \exp(-\nu_0 \, t)$. This implies
\begin{equation}
\label{02062017}
E_0(t)
=H(T-t)
\leq
  H(0) \, \exp(-\nu_0 (T-t) )
=
 E_0(T)
 \,
\exp(-\nu_0 (T - t))
.
\end{equation}
Note that
due to the terminal condition at the time $T$ in
(\ref{adjointsystem1611}), the number $E_0(T)$ is completely determined by the function
$z^{(\upsigma)}(x)$.
In fact, by the definition of $E_0$ in (\ref{lyapunovdefinition}) we have
\[E_0(T)
 =\frac{1}{2}
\int_0^L \exp(- \mu_+ \,x) \left( z_+^{(\upsigma)}(x)\right)^2
+
\exp( \mu_- \, \,x)  \left( z_-^{(\upsigma)}(x) \right)^2\, dx
.
\]
Hence $E_0(T)$ is independent of $T$.
The definition of $E_0(t)$ in (\ref{lyapunovdefinition})  implies that
\begin{equation*}
\label{lyapunovdefinition2017a}
\int_0^L \left( z(t,\, x) - z^{(\upsigma)}(x) \right)^\top
\,
\left( z(t,\, x) - z^{(\upsigma)}(x)  \right)
\, dx
\leq
\, 2 \exp(- \mu_-\, L)\, E_0(t).
\end{equation*}
With (\ref{02062017}) this implies
\begin{equation*}
\int_0^L \|z(t,\, x) - z^{(\upsigma)}(x) \|^2_{{\mathbb R}^2}\, dx
\leq
\, 2 \exp(- \mu_-\, L)\, E_0(T) \, \,
\exp(-\nu_0 (T - t)).
\end{equation*}
%
%
For  $t>0$, define $y_0^{(t)}(x)=z(t,\,x) - z^{(\upsigma)}(x)$.
Since $z^{(\upsigma)}(x)\in H^1(0,\,L)$,
the well-posedness result
 Theorem A.1 from \cite{BastinCoron2016}
applied to (\ref{adjointsystem1611})
implies that $y_0^{(t)}\in H^1(0,\,L)$.
  Theorem A.1 from \cite{BastinCoron2016}
  also yields the existence of a solution to the
following  initial boundary value problem
for $x\in (0,L)$ and $s \in [t,\,t+1]$:
\[
\left\{
\begin{array}{l}
z(t,\,x) - z^{(\upsigma)}(x) =  y_0^{(t)}(x),
\\
(z-z^{(\upsigma)})_t(s,x) + D\, (z-z^{(\upsigma)}) _x(s,x)  = -( \eta_0\,M(x)^\top  + D'(x))\, (z-z^{(\upsigma)})(s,x) ,
\\
z_+(s,\, L) - z_+^{(\upsigma)}(L)=  0,
\\
z_-(s,\,0) - z_-^{(\upsigma)}(0) =  0.
\end{array}
\right.
\]
%
Inequality (\ref{02062017a}) implies
\begin{eqnarray*}
& &
d_+(0)\,\|z_+(\cdot,\, 0) - z_+^{(\upsigma)}(0)\|_{L^2(t-1, t)}^2
+
|d_-(L)|\,\|z_-(\cdot,\,L) - z_-^{(\upsigma)}(L)\|_{L^2(t-1, t)}^2
\\
&
\leq
&
(1 + \tilde C_2)
\|y_0^{(t)}\|_{L^2((0,\,L); {\mathbb R}^2)}^2
\leq
 2 \, (1 + \tilde C_2) \,
  \exp(- \mu_-\, L)\, E_0(T) \, \exp(-\nu_0 (T - t))
.
\end{eqnarray*}
%
%
%
%
%
%
Define the constant
\[
{\hat C}
=
 2\, (1 + \tilde C_2) \, \exp(- \mu_-\, L)\, E_0(T)
\,\frac{\exp(\nu_0)}{1- \exp\left(\nu_0\right)}
.
\]
Then ${\hat C}$ is independent of $T$ and we have the inequality
\begin{eqnarray*}
& &d_+(0)\,\|z_+(\cdot,\, 0) - z_+^{(\upsigma)}(0)\|_{L^2(0,\, T)}^2
+
|d_-(L)|\, \|z_-(\cdot,\,L) - z_-^{(\upsigma)}(L)\|_{L^2(0, \, T)}^2
\\
&
\leq
&
\sum_{j\in
{\mathbb N}: j-1< T}
d_+(0)\,\|z_+(\cdot,\, 0) - z_+^{(\upsigma)}(0)\|_{L^2(j-1, j)}^2
+
|d_-(L)|\,\|z_-(\cdot,\,L) - z_-^{(\upsigma)}(L)\|_{L^2(j-1, j)}^2
\\
&
\leq
&
\sum_{j\in
{\mathbb N}:
T- j >-1}
 2 \, (1 + \tilde C_2) \,\exp(- \mu_-\, L)\, E_0(T) \, \exp(-\nu_0 (T - j))
\\
&
\leq
&
 2 \, (1 + \tilde C_2) \,\exp(- \mu_-\, L)\, E_0(T) \, \exp(\nu_0)
 \,\sum_{j=0}^\infty  \left(\exp(-\nu_0)\right)^j
  =
{\hat C}
.
\end{eqnarray*}

%
%
On account of (\ref{08062017a}), this implies
that there exists a constant $C_2>0$ that is independent of $T$ such that
for all $T>0$ we have the uniform bound
\begin{equation}
\label{vorletzte}
\left\|\left( F_T^\ast - F_{(\upsigma)}^\ast\right) p_0 \right\|_{H}\leq C_2.
\end{equation}
Now we consider
$
\left( {\cal M}_2
+
F_T^\ast  {\cal M}_4
\right)
\left((F_T - F_\upsigma) \, u^{(\upsigma)} \right)
 $.
First we show that
$
(F_T - F_\upsigma) \,  u^{(\upsigma)}
$
decays exponentially with time.
%
By definition, we have
\[   (F_T - F_\upsigma) \, u^{(\upsigma)}
=\left(
\begin{array}{r}
r_+(\cdot, L) - r_+^{(\upsigma)}(L)
\\
r_-(\cdot,\, 0) - r_-^{(\upsigma)}(0)
 \end{array}
\right),
\]
where for $x\in (0,L)$ and $t\in (0,T)$ we have
\begin{equation}
\label{linearizedsystem17112016}
\left\{
\begin{array}{l}
r(0,\,x) -r^{(\upsigma)}(x) = -r^{(\upsigma)}(x),
\\
(r - r^{(\upsigma)})_t + D\, (r - r^{(\upsigma)})_x = \eta_0\,M\, (r - r^{(\upsigma)}),
\\
r_+(t,\,0)- r^{(\upsigma)}_+(0) = 0,
\\
r_-(t,\, L) - r^{(\upsigma)}_-(L) = 0.
\end{array}
\right.
\end{equation}
Again similar to \cite{BastinCoron2016}
but this time for $\mu_+>0$ and $\mu_->0$
consider the Lyapunov function with exponential weights
\begin{equation}
\label{lyapunovdefinition17}
E_1(t)
 = \frac{1}{2}
\int_0^L {\rm e}^{- \mu_+ \,x} \left( r_+(t,\, x) - r_+^{(\upsigma)}(x) \right)^2
+
{\rm e}^{\, \mu_- \, \,x} \left( r_-(t,\, x) - r_-^{(\upsigma)}(x) \right)^2\, dx
\end{equation}
\[
 =
 \frac{1}{2}
\int_0^L  \left( r(t,\, x) - r^{(\upsigma)}(x) \right)^\top\, E(x)\,
 \left( r(t,\, x) - r^{(\upsigma)}(x) \right)\, dx
 .
\]
For the time derivative of $E_1$ we obtain with (\ref{linearizedsystem17112016})
\begin{eqnarray*}
E_1'(t) & = &
\int_0^L
-\left( r(t,\, x) - r^{(\upsigma)}(x)  \right)^\top
\,
E(x)\,D(x)
\,
\left( r(t,\, x) - r^{(\upsigma)}(x)  \right)_x
\\
& &
+\eta_0 \left( r(t,\, x) - r^{(\upsigma)}(x)  \right)^T \, E(x)\, M(x)
\,\left( r(t,\, x) - r^{(\upsigma)}(x)  \right)
\,
dx.
\end{eqnarray*}
Integration by parts yields
\begin{eqnarray*}
E_1'(t) & = &
- \frac{1}{2}
\left( r(t,\, x) - r^{(\upsigma)}(x)  \right)^\top
\,
E(x)\,D(x)
\,
\left( r(t,\, x) - r^{(\upsigma)}(x)  \right)|_{x=0}^L
\\
& &
+
\int_0^L
\frac{1}{2}\,
\left( r(t,\, x) - r^{(\upsigma)}(x)  \right)^\top
\,
E'(x)\,D(x)
\,
\left( r(t,\, x) - r^{(\upsigma)}(x)  \right)\,dx
\\
& &
-
\int_0^L
 \left( r(t,\, x) - r^{(\upsigma)}(x)  \right)^\top \, M_1(x)
\,\left( r(t,\, x) - r^{(\upsigma)}(x)  \right)
\,
dx,
\end{eqnarray*}
with the matrix $M_1$ as defined in (\ref{edefinition}).
 Due to the boundary conditions in
(\ref{linearizedsystem17112016}) the terms  coming from the boundary vanish.
 With  $\mu_+>0$,  $\mu_->0$, and $\nu_a<0$ as in (\ref{corondelta}),
 this yields the inequality
 \begin{eqnarray*}
  E_1'(t)
 & \leq &
\nu_a  \, E_1(t).
\end{eqnarray*}

Define $\nu_1 = - \nu_a>0$.
Then
Gronwall's inequality implies that $E_1(t)$ decays with an exponential rate $\nu_1$.
This implies that for all $t\in (0,T)$ we have
\[
E_1(t)
\leq
 E_1(0)
 \,
\exp(-\nu_1 \, t)
.
\]
For  $t>0$, define $z_0^{(t)}(x)=r(t,\,x) - r^{(\upsigma)}(x)$.
%
%
Since $r^{(\upsigma)}(x)\in H^1(0,\,L)$,
the well-posedness result
 Theorem A.1 from \cite{BastinCoron2016}
implies
$z_0^{(t)}\in H^1(0,\,L)$.
Now we apply  Theorem A.1 from \cite{BastinCoron2016} to the
 initial boundary value problem
for $x\in (0,L)$ and $s \in [0,\,1]$
\[
\left\{
\begin{array}{l}
r(t,\,x) - r^{(\upsigma)}(x) =  z_0^{(t)}(x),
\\
(r-r^{(\upsigma)})_t + D\, (r-r^{(\upsigma)}) _x =  \eta_0\,M\, (r-r^{(\upsigma)}),
\\
r_+(s,\, 0) - r_+^{(\upsigma)}(0)=  0,
\\
r_-(s,\,L) - r_-^{(\upsigma)}(L) =  0.
\end{array}
\right.
\]
Similar as in the discussion for $E_0$,
(\ref{08062017aa}) implies
\begin{eqnarray*}
& &
d_+(L)\,\|r_+(\cdot,\, L) - r_+^{(\upsigma)}(L)\|_{L^2(t,\, t+1)}^2
+
|d_-(0)|\,\|r_-(\cdot,\,0) - r_-^{(\upsigma)}(0)\|_{L^2(t, \,  t+1)}^2
\\
&
\leq
&
(1 + \tilde C_3)\,
\|z_0^{(t)}\|_{L^2((0,\,L); {\mathbb R}^2)}^2
\leq
 2 \, (1 + \tilde C_3)\,\exp(\mu_+\, L)\, E_1(0) \, \exp(-\nu_1 \, t)
.
\end{eqnarray*}
Hence we have the inequality
\begin{eqnarray*}
& &d_+(L)\,\|r_+(\cdot,\, L) - r_+^{(\upsigma)}(L)\|_{L^2(0,\, T)}^2
+
|d_-(0)|\, \|r_-(\cdot,\,0) - r_-^{(\upsigma)}(0)\|_{L^2(0, \, T)}^2
\\
&
\leq
&
\sum_{j\in
{\mathbb N}:
 j< T}
d_+(L)\,\|r_+(\cdot,\, L) - r_+^{(\upsigma)}(L)\|_{L^2(j, j+1)}^2
+
|d_-(0)|\,\|r_-(\cdot,\,L) - r_-^{(\upsigma)}(L)\|_{L^2(j, j+1)}^2
\\
&
\leq
&
\sum_{j\in
{\mathbb N}:
T- j >0}
 2 \, (1 + \tilde C_3)\,\exp( \mu_+\, L)\, E_1(0) \, \exp(-\nu_1 \,j)
\\
&
\leq
&
 2 \, (1 + \tilde C_3)\,\exp(\mu_+\, L)\, E_1(0)
 \,\sum_{j=0}^\infty  \left(\exp(-\nu_1)\right)^j
  =
\frac{
 2\, (1 + \tilde C_3)\, \exp(\mu_+\, L)}
 {1 - \exp(-\nu_1)}
 \, E_1(0)
.
\end{eqnarray*}
%
%
%
%
%
%
%
%
%
%
%
Note that
due to the initial condition in (\ref{linearizedsystem17112016})
 the number $E_1(0)$ is only determined by
$r^{(\upsigma)}$.
Hence there exists a  constant $C_4$  that is independent of $T$
%
%
%
such that  for all $T>0$,
we have
$\left\|\left( F_T - F_{(\upsigma)}\right) \, u^{(\upsigma)} \right\|_{\left(L^2(0,T)\right)^2}\leq C_4
$.
Due to (\ref{ftnormadjoint}) this implies
\begin{equation}
\label{letzte}
\left\|
\left( {\cal M}_2
+
F_T^\ast  {\cal M}_4
\right)
\,\left( F_T - F_{(\upsigma)}\right) \, u^{(\upsigma)} \right\|_{\left(L^2(0,T)\right)^2}\leq
C_4\,\left( \|{\cal M}_2\| + \|{\cal M}_4\|  C_N \right).
\end{equation}
Here $\|{\cal M}_2\|$  and  $\|{\cal M}_4\|$ are matrix norms for the Euclidean space ${\mathbb R}^2$.
%
Thus (\ref{c4assertion}) follows
from
(\ref{vorletzte}) and (\ref{letzte}) with the constant
$C_D =
C_2 +
C_4\, \left( \|{\cal M}_2\| + \|{\cal M}_4\|  C_N \right)
.
$
\end{proof}
Now we can prove  Theorem \ref{satz1}.

{\bf Proof of  Theorem \ref{satz1}:}
For the objective functional   of the dynamic optimal control problem
(\ref{ocplambda}) with the  representation as in (\ref{09122016a})
we introduce the notation
\[
\tilde J(u)
=
J(u,\, F_T u)
.
\]
Define
\begin{equation}
\label{qdefinition}
q(u - \bar u)
=
\tfrac{1}{2}
\,
\left\langle
\left(
(u - \bar u)_+, \,
(F_T (u - \bar u))_-
\right)^\top,
\,
A_0 \,
\left(
(u - \bar u)_+, \,
(F_T (u - \bar u) )_-
\right)^\top
\right\rangle_{  H }
\end{equation}
\[
+ \tfrac{1}{2}
\,
\left\langle
\left(
(u - \bar u)_-, \,
(F_T (u - \bar u))_+
\right)^\top,
\,
A_L \,
\left(
(u - \bar u)_-, \,
(F_T (u - \bar u))_+
\right)^\top
\right\rangle_{  H }
.
\]
For all  $u$ and $\bar u\in H$,
we can represent $\tilde J$ in the form
%
$
\tilde J(u )
$
\[
=
\tilde J(\bar u)
+
\langle
{\cal M}_1  \bar u + {\cal M}_2 F_T  \bar u + v_1  + F_T^\ast\left({\cal M}_3  \bar u + {\cal M}_4 F_T  \bar u + v_2\right),  u - \bar u
\rangle_{H}
%
%
%
%
+
q(u- \bar u )
.
%
\]
With the notation
$
D\tilde J(\bar u)
=
{\cal M}_1  \bar u + {\cal M}_2 F_T  \bar u + v_1  + F_T^\ast\left({\cal M}_3  \bar u + {\cal M}_4 F_T  \bar u + v_2\right)
$
we have
\begin{equation}
\label{tildeudefinitionentwicklunga}
\tilde J( u )=
\tilde J(\bar u)
+
\langle
D\tilde J(\bar u)
,
\,
u - \bar u
\rangle_H
+
q(u - \bar u ).
\end{equation}
For the optimal control
$u^{(\updelta,\,T)}$,
the necessary optimality condition
(\ref{optimalitysystem})
implies that $D\tilde J(\bar u^{(\updelta,\,T)})=0$,
hence we have
\begin{equation}
\label{tildeudefinitionentwicklunglambda}
\tilde J( u^{(\upsigma)} )=
\tilde J(u^{(\updelta,\,T)})
+
q(  u^{(\upsigma)} -  u^{(\updelta,\,T)} )
.
\end{equation}
On the other hand, we have
\begin{equation}
\label{tildeudefinitionentwicklungsigma}
\tilde J( u^{(\updelta,\,T)} )=
\tilde J( u^{(\upsigma)} )
+
\left\langle
D\tilde J( u^{(\upsigma)} )
,
\,
u^{(\updelta,\,T)} -  u^{(\upsigma)}
\right\rangle_H
+
q(  u^{(\updelta,\,T)} -   u^{(\upsigma)}  ).
\end{equation}
Adding up (\ref{tildeudefinitionentwicklunglambda}) and
(\ref{tildeudefinitionentwicklungsigma}) yields
\[
\tilde J( u^{(\upsigma)} ) + \tilde J( u^{(\updelta,\,T)} ) =
\tilde J( u^{(\upsigma)} ) + \tilde J( u^{(\updelta,\,T)} )
\]
\[
+
\left\langle
D\tilde J( u^{(\upsigma)} )
,
\,
u^{(\updelta,\,T)} -  u^{(\upsigma)}
\right\rangle_H
+
2\,
q(  u^{(\updelta,\,T)} -   u^{(\upsigma)}  ).
\]
This implies
\[
\left\langle
D\tilde J( u^{(\upsigma)} )
,
\,
u^{(\upsigma)} -  u^{(\updelta,\,T)}
\right\rangle_H
=
2\,
q(  u^{(\updelta,\,T)} -   u^{(\upsigma)}  )
.
\]
Since $A_0$ and $A_L$ are positive definite,
there exists a constant $\kappa>0$ such that
\begin{equation}
\label{qungleichung}
2 \,q(  u^{(\updelta,\,T)} -   u^{(\upsigma)}  ) \geq \kappa\, \| u^{(\updelta,\,T)} -   u^{(\upsigma)} \|_H^2.
\end{equation}
Hence we obtain the inequality
\[
\kappa\, \| u^{(\updelta,\,T)} -   u^{(\upsigma)} \|_H^2
\leq
\left\| D\tilde J( u^{(\upsigma)} ) \right\|_H
\;
\| u^{(\updelta,\,T)} -  u^{(\upsigma)}
\|_H.
\]
Thus we have
\begin{equation}
\label{abbruch}
\| u^{(\updelta,\,T)} -  u^{(\upsigma)}  \|_H
\leq
\frac{1}{\kappa}
 \, \| D\tilde J( u^{(\upsigma)} ) \|_H.
\end{equation}
In order to use (\ref{abbruch}) to prove (\ref{uniformbounded09122016}),
we need an upper bound for 
\begin{equation}
\label{09212016b}
\left\| D\tilde J( u^{(\upsigma)} ) \right\|_H
=
\left\|
{\cal M}_1  u^{(\upsigma)}  + {\cal M}_2 F_T  u^{(\upsigma)}  + v_1  + F_T^\ast\left({\cal M}_3  u^{(\upsigma)}  + {\cal M}_4 F_T  u^{(\upsigma)} + v_2\right)
\right\|_H.
\end{equation}
%
%
Due to inequality (\ref{abbruch})
and equation (\ref{09212016b}), with the
choice $\bar C = \frac{C_D}{\kappa}$,
inequality (\ref{c4assertion}) from
Lemma \ref{lemma2016}
implies (\ref{uniformbounded09122016}).
Then inequality
(\ref{19042018a})
from
Lemma \ref{exponentialdecayconservationlemma19042018}
yields (\ref{20042018l}).
Thus we have proved  Theorem \ref{satz1}.
\section{Analysis for the case with an integer-constraint} \label{derivationint}

\subsection{Turnpike structure for the problem with one-sided control}
\label{turnpikeonesided}
For $T>0$,
let
$u^{(\updelta,\,T)}\in H$
denote the optimal control
that solves
(\ref{ocplambdaswitch}).
In Theorem \ref{satzintegerturnpike}
we have assumed that
$\nu$ satisfies (\ref{16012017}) so
that no switching occurs in the optimal control.
Hence
for all $T>0$ the plus-component in the  optimal control  has the
form  $u_+^{(\updelta,\,T)} = \alpha \in {\mblue \cal F}$.
For the proof of
Theorem \ref{satzintegerturnpike}
we start with an auxiliary result
about the optimal control
problem where the $+$-component of the control is fixed
in advance.
\begin{lemma}
Let $T>0$ and $u^{(\updelta,\,T)}_+\in L^2(0,\, T)$ be given.
The control  $u^{(\updelta,\,T)}_-$ is a solution of the dynamic optimal control problem
\begin{equation}
\label{ocplambdaminus}
\left\{
\begin{array}{l}
\min_{u_-\in  L^2(0,\, T),\, (r_+(\cdot, L),\,r_-(\cdot,\,0)) \in H} J((u_+,u_-),\, (r_+(\cdot, L),\,r_-(\cdot,\,0))  )\;
\\
\mbox{\rm subject to
(\ref{linearizedsystem})}
\end{array}
\right.
\end{equation}
if and only if there exists a multiplier $p^{(\updelta,\,T)}$
such that the optimality system
\begin{equation}
\label{optimalitysystemminus}
\lambda\,u^{(\updelta,\,T)}_-
+(1-\lambda)
\,
\left(F_T^\ast \left(F_T\,  u^{(\updelta,\,T)}  - R^{\ourdesi} \right)\right)_-
=
0
\end{equation}
holds, that is for $(t,\, x) \in (0,\, T) \times (0,\, L)$ almost everywhere we have
(\ref{39a}) and
\begin{equation}
\label{39aminus}
\lambda\,u^{(\updelta,\,T)}_-(t) + (1-\lambda)
\,|d_-(L)|\,   p_-^{(\updelta,\,T)}(t,\, L) = 0.
\end{equation}
\end{lemma}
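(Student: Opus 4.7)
The plan is to obtain the lemma as a straightforward specialization of the unconstrained first-order optimality analysis of Lemma \ref{neccessaryoptimalityconditionsdynamic}, restricted to variations in the second component of the control only. Since $u_+$ is held fixed in (\ref{ocplambdaminus}), the problem reduces to an unconstrained minimization over $u_- \in L^2(0,T)$ of a strictly convex quadratic functional; by Lemma \ref{ftlemma1} the map $u \mapsto F_T u$ is continuous linear, so substituting into (\ref{objectiveneu2018}) yields (up to additive constants)
\begin{equation*}
\tilde J(u) = \lambda \|u\|_H^2 + (1-\lambda)\,\|F_T u - R^{\ourdesi}\|_H^2.
\end{equation*}
Strict convexity follows because the $\lambda\|u\|_H^2$ term is strictly convex in $u_-$ for any fixed $u_+$; hence the first-order condition will be both necessary and sufficient.

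Next I would compute the Fréchet derivative with respect to $u_-$ by taking a variation $\delta \in L^2(0,T)$ in the $-$-component only and using the defining equation (\ref{adjointdesiredequation}) of $F_T^\ast$:
\begin{equation*}
D_{u_-}\tilde J(u)[\delta] = 2\,\bigl\langle \lambda u_- + (1-\lambda)\bigl(F_T^\ast(F_T u - R^{\ourdesi})\bigr)_-,\; \delta \bigr\rangle_{L^2(0,T)}.
\end{equation*}
Setting this to zero for all admissible $\delta$ yields (\ref{optimalitysystemminus}), and strict convexity closes the equivalence in both directions.

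It remains to translate (\ref{optimalitysystemminus}) into the PDE form (\ref{39a})--(\ref{39aminus}). For this I would apply Lemma \ref{adjoint24052018} with the choice $z_T = (1-\lambda)(F_T u^{(\updelta,T)} - R^{\ourdesi})$: the resulting adjoint $p^{(\updelta,T)} := z$ satisfies the terminal-time transport equation stated in the second half of (\ref{39a}) with the simplified boundary data
\begin{equation*}
p_+^{(\updelta,T)}(t,L) = \tfrac{1-\lambda}{d_+(L)}\bigl(R^{(\updelta,T)}_+(t,L) - R^{\ourdesi}_+\bigr), \quad p_-^{(\updelta,T)}(t,0) = \tfrac{1-\lambda}{|d_-(0)|}\bigl(R^{(\updelta,T)}_-(t,0) - R^{\ourdesi}_-\bigr),
\end{equation*}
which are exactly the specializations of the general boundary conditions in (\ref{39a}) to the present objective (where ${\cal M}_3 = 0$ and ${\cal M}_4$ reduces to the diagonal entry $(1-\lambda)$ in the appropriate slot, and $v_2$ contributes $-(1-\lambda)R^{\ourdesi}$). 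Reading off the second component of $F_T^\ast$ from Lemma \ref{adjoint24052018} then identifies $(F_T^\ast(F_T u - R^{\ourdesi}))_- = |d_-(L)|\,p_-^{(\updelta,T)}(\cdot,L)$, which converts (\ref{optimalitysystemminus}) into (\ref{39aminus}).

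The main obstacle is the careful bookkeeping in this last translation step: one must check that the boundary data for the adjoint in (\ref{adjointsystem}) really matches the boundary data in (\ref{39a}) after the specialization ${\cal M}_3 = 0$, ${\cal M}_4 = (1-\lambda)I_{\text{diag}}$, $v_2 = -(1-\lambda)R^{\ourdesi}$, and that the factor of two from the gradient of $\tilde J$ cancels uniformly on both sides of (\ref{optimalitysystemminus}). All other steps are routine.
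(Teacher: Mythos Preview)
Your approach is exactly what the paper intends: it omits the proof with the remark that it is ``similar to the proof of Lemma~\ref{neccessaryoptimalityconditionsdynamic}'', and you correctly carry out that specialization by restricting the variation to the $u_-$ component and invoking strict convexity for the converse direction.

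One small slip in the bookkeeping you flagged as the obstacle: with your choice $z_T = (1-\lambda)(F_T u^{(\updelta,T)} - R^{\ourdesi})$ and $p^{(\updelta,T)} := z$, linearity of $F_T^\ast$ gives $(1-\lambda)\bigl(F_T^\ast(F_T u - R^{\ourdesi})\bigr)_- = |d_-(L)|\,p_-^{(\updelta,T)}(\cdot,L)$, not $\bigl(F_T^\ast(F_T u - R^{\ourdesi})\bigr)_- = |d_-(L)|\,p_-^{(\updelta,T)}(\cdot,L)$ as you wrote; the clean fix is simply to take $z_T = F_T u^{(\updelta,T)} - R^{\ourdesi}$ without the prefactor, after which substitution into (\ref{optimalitysystemminus}) yields (\ref{39aminus}) directly and the boundary data for $p$ in (\ref{39a}) match up to the overall normalization already present in the general system.
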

The proof is similar to the proof of
Lemma \ref{neccessaryoptimalityconditionsdynamic} and is therefore omitted.

Let  $u_+^{(\upsigma)}=\alpha \in {\mblue \cal F}$ be given.
Define the static optimal control problem
\begin{equation}
\label{ocplambdastaticswitchuplusgiven}
\left\{
\begin{array}{l}
\min_{
u^{(\upsigma)}_-\in {\mathbb R},
\,
 \,R^{(\upsigma)}\in ( L^{2}(0,\, L))^2} J_0(u^{(\upsigma)},\, R^{(\upsigma)})\;
\\
\mbox{\rm subject to
(\ref{staticlinearizedsystem})}.
\end{array}
\right.
\end{equation}
Let $u^{(\upsigma,\,\alpha)}$ denote
a  static optimal control that solves
(\ref{ocplambdastaticswitchuplusgiven}).
Thus in particular $u^{(\upsigma,\,\alpha)}_+=\alpha$.
Let  $R^{(\upsigma,\,\alpha)}$ denote the  state
generated by
$u^{(\upsigma,\,\alpha)}$
as a solution of (\ref{staticlinearizedsystem}).
Now we state the necessary optimality conditions
for  the static optimal control problem
(\ref{ocplambdastaticswitchuplusgiven}).
The number
$u^{(\upsigma,\, \alpha)}_-$ can only be a { static} optimal control
if there exists a multiplier $P^{(\upsigma,\,\alpha)}$
such that the optimality system
\begin{equation}
\label{optimalitysystemstaticminus}
 \lambda\,u^{(\upsigma,\,\alpha)}_-
+
(1 - \lambda)\,
\left(
F_{(\upsigma)}^\ast
\left(
F_{(\upsigma)} u^{(\upsigma,\,\alpha)}  - R^{\ourdesi}
\right)
\right)_-
=
0
\end{equation}
holds, i.e.,
if (\ref{39astatic14})
holds
with $R^{(\upsigma)}  = R^{(\upsigma,\,\alpha)}$,
$u^{(\upsigma)}  = u^{(\upsigma,\,\alpha)}$,
$P^{(\upsigma)}  = P^{(\upsigma,\,\alpha)}$
 and the equation
$ \lambda\, u^{(\upsigma,\,\alpha)}_- + (1 - \lambda)\, |d_-(L)|\,   P_-^{(\upsigma,\,\alpha)}(L) = 0$
is  satisfied.

In our analysis we use  the following lemma
that is similar to   Lemma \ref{lemma2016}:
\begin{lemma}
\label{lemma2016minus}
For $t\in [0,\, T]$ and $\alpha \in {\mblue \cal F}$
define the constant control
\begin{equation}
\label{110120171423}
 u^{(s,\, T)}_+(t) =
 \alpha,
 \;
 u^{(s,\, T)}_-(t) = u^{(\upsigma, \, \alpha)}_-
\end{equation}
where $u^{(\upsigma, \, \alpha)}_-$
is the solution of the static problem
(\ref{ocplambdastaticswitchuplusgiven}).
There exist  constants $C_4>0$, $C_E>0$ that are independent of $T$
such that   for all $T>0$  we have
\begin{equation}
\label{17022017a}
\left\|\left( F_T - F_{(\upsigma)}\right) \, (u^{(s,\, T)}) \right\|_{H}\leq C_4,
\end{equation}
\begin{equation}
\label{c4assertionminus}
\left\|
 \lambda\, u^{(s,\, T)}_-  + (1 - \lambda)\, \left( F_{T}^\ast \left(F_{T} \, u^{(s,\, T)}  - R^{\ourdesi} \right)\right)_- \right\|_{L^2(0,\,T)}
\leq
C_E.
\end{equation}
\end{lemma}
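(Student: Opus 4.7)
The plan is to mimic the argument in Lemma~\ref{lemma2016}, specialized to the particular objective (\ref{objectiveneu2018}) where $\mathcal{M}_1 = \lambda\,\mathrm{id}$, $\mathcal{M}_4 = (1-\lambda)\,\mathrm{id}$, $\mathcal{M}_2 = \mathcal{M}_3 = 0$, $v_1 = 0$ and $v_2 = -(1-\lambda) R^{\ourdesi}$, and to extract only the minus--component at the end. In both (\ref{17022017a}) and (\ref{c4assertionminus}) the quantity to be bounded is a difference between dynamic and static objects; the key point is that this difference decays exponentially in time, so that summing geometric series over unit--length time windows produces a $T$--independent bound.

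For (\ref{17022017a}) I would consider the difference $r - r^{(\upsigma,\alpha)}$, where $r$ solves (\ref{linearizedsystem}) with the constant-in-time data $u^{(s,T)}$ and $r^{(\upsigma,\alpha)}$ solves (\ref{staticlinearizedsystem}) with data $u^{(\upsigma,\alpha)}$. Because the two boundary data coincide as vectors in $\mathbb{R}^2$, the difference satisfies a hyperbolic system with zero Dirichlet data and initial datum $-r^{(\upsigma,\alpha)}(x)$, exactly of the form (\ref{linearizedsystem17112016}). I would plug in the Lyapunov functional $E_1(t)$ from (\ref{lyapunovdefinition17}) with weights $\mu_{\pm}>0$ chosen so that (\ref{corondelta}) holds. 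Integration by parts and the homogeneous boundary conditions give $E_1'(t) \leq \nu_a E_1(t)$ with $\nu_a < 0$, hence the forward exponential decay. The trace estimate (\ref{08062017aa}) applied on intervals $[j,j+1]$, combined with summation of a convergent geometric series, yields a bound on $\|r_+(\cdot,L)-r_+^{(\upsigma,\alpha)}(L)\|_{L^2(0,T)}^2 + \|r_-(\cdot,0)-r_-^{(\upsigma,\alpha)}(0)\|_{L^2(0,T)}^2$ that depends only on $E_1(0)$, which in turn depends only on $r^{(\upsigma,\alpha)}$; this is precisely (\ref{17022017a}).

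For (\ref{c4assertionminus}) I would exploit the static optimality condition (\ref{optimalitysystemstaticminus}), which allows us to subtract a vanishing term and rewrite
\[
\lambda u^{(s,T)}_- + (1-\lambda)\bigl(F_T^{\ast}(F_T u^{(s,T)} - R^{\ourdesi})\bigr)_-
= (1-\lambda)\Bigl[F_T^{\ast}\bigl(F_T u^{(s,T)} - R^{\ourdesi}\bigr) - F_{(\upsigma)}^{\ast}\bigl(F_{(\upsigma)} u^{(\upsigma,\alpha)} - R^{\ourdesi}\bigr)\Bigr]_-,
\]
with $u^{(\upsigma,\alpha)}$, $R^{\ourdesi}$ and $F_{(\upsigma)}u^{(\upsigma,\alpha)}$ embedded in $H$ as constant functions. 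An add--and--subtract splits the bracket into $F_T^{\ast}\bigl((F_T - F_{(\upsigma)})u^{(s,T)}\bigr)$ and $(F_T^{\ast} - F_{(\upsigma)}^{\ast})\bigl(F_{(\upsigma)} u^{(\upsigma,\alpha)} - R^{\ourdesi}\bigr)$. The first summand is bounded by $C_N\,C_4$ using the uniform operator bound (\ref{ftnormadjoint}) and (\ref{17022017a}). For the second summand, I set $p_0 = F_{(\upsigma)} u^{(\upsigma,\alpha)} - R^{\ourdesi} \in \mathbb{R}^2$, use the explicit representation (\ref{08062017a}) for $(F_T^{\ast} - F_{(\upsigma)}^{\ast}) p_0$ in terms of $z - z^{(\upsigma)}$, and apply the Lyapunov functional $E_0(t)$ from (\ref{lyapunovdefinition}) with weights $\mu_{\pm}<0$ chosen according to (\ref{corondeltb}). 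This yields the backward exponential decay $E_0(t) \leq E_0(T)\exp(-\nu_0(T-t))$ with $\nu_0>0$ and $E_0(T)$ determined only by $z^{(\upsigma)}$. The trace estimate (\ref{02062017a}) on intervals $[j-1,j]$ followed by a geometric summation produces a $T$--independent bound on $(F_T^{\ast} - F_{(\upsigma)}^{\ast}) p_0$ in $H$, exactly as in the proof of Lemma~\ref{lemma2016}.

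The main obstacle is purely bookkeeping: both quantities to be bounded are $L^2$-norms over the growing interval $(0,T)$, and only the combination of exponential decay in the Lyapunov functionals with summation of the resulting geometric series over unit windows saves the day. Once this machinery is assembled, the triangle inequality yields (\ref{c4assertionminus}) with a constant of the form $C_E = (1-\lambda)\bigl(C_N\,C_4 + \hat{C}\bigr)$, where $\hat{C}$ is the constant from the adjoint--decay argument. No part of the proof requires anything beyond the techniques already developed in Section~\ref{derivation}; the slight novelty is only in the asymmetric role of the two control components, which is handled by keeping the $+$--component of $u^{(s,T)}$ fixed to $\alpha$ throughout.
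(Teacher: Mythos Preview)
Your proposal is correct and follows exactly the route the paper indicates: the paper omits the proof entirely, stating only that it is similar to that of Lemma~\ref{lemma2016}, and your sketch faithfully reproduces that argument specialized to the objective (\ref{objectiveneu2018}) and restricted to the minus--component via the one--sided static optimality condition (\ref{optimalitysystemstaticminus}). The decomposition into $F_T^\ast\bigl((F_T-F_{(\upsigma)})u^{(s,T)}\bigr)$ and $(F_T^\ast-F_{(\upsigma)}^\ast)p_0$, the forward and backward Lyapunov decay estimates, and the geometric summation over unit windows are all as in the paper.
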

The proof is similar to the proof of  Lemma \ref{lemma2016} and is therefore omitted.

\subsection{The turnpike phenomenon with integer constraint}
\label{turnpikewithintegerconstraint}
In this section, we show that if the dynamic control
at the boundary point $x=0$ is fixed,
the corresponding optimal dynamic control at $x=L$   has a turnpike structure.
%
%
\begin{lemma}
\label{satz1minus}
For given $u_+^{(\updelta,\,T,\, \alpha)}(t)  = \alpha \in {\mblue \cal F}$,
let $u^{(\updelta,\,T,\, \alpha)}_-\in L^2(0,\,T)$ denote the optimal dynamic control
that solves (\ref{ocplambdaminus}).
There exists
a constant $\bar C>0$
that is independent of $T$ and $\alpha$
such that for all $T>0$ we have
\begin{equation}
\label{uniformbounded09122016minus}
\frac{1}{T}\;
\int_0^{T} \left|u^{(\updelta,\,T,\, \alpha)}_-(t) - u^{(\upsigma, \, \alpha)}_-\right|^2 \, d\,t
\leq \frac{1}{T} \;\bar C.
\end{equation}
\end{lemma}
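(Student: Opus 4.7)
The plan is to adapt the variational argument used in the proof of Theorem~\ref{satz1} to the scalar situation where only $u_-$ varies while $u_+\equiv\alpha\in{\cal F}$ is held fixed. First I would introduce the reduced objective $\tilde J_\alpha(u_-) = J((\alpha,u_-),\,F_T(\alpha,u_-))$, which by (\ref{objectiveneu2018}) is quadratic in $u_-$. Expanding around an arbitrary $\bar u_-$ I obtain
\[
\tilde J_\alpha(u_-) = \tilde J_\alpha(\bar u_-) + \bigl\langle D\tilde J_\alpha(\bar u_-),\, u_- - \bar u_-\bigr\rangle_{L^2(0,T)} + Q_\alpha(u_- - \bar u_-),
\]
with derivative
\[
D\tilde J_\alpha(\bar u_-) = \lambda\, \bar u_- + (1-\lambda)\bigl(F_T^\ast\bigl(F_T(\alpha,\bar u_-) - R^{\ourdesi}\bigr)\bigr)_-
\]
and quadratic form $Q_\alpha(w_-) = \int_0^T(1-\lambda)\|F_T(0,w_-)(t)\|_{\mathbb R^2}^2 + \lambda\,|w_-(t)|^2\,dt$. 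The decisive point is that the explicit $\lambda\|u\|^2$ penalty gives the coercivity bound $Q_\alpha(w_-)\geq \lambda\|w_-\|_{L^2(0,T)}^2$, with coercivity constant independent of $T$ and of $\alpha$.

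Next I would reuse the two-point expansion trick from the proof of Theorem~\ref{satz1}. The optimality condition (\ref{39aminus}) states $D\tilde J_\alpha(u^{(\updelta,T,\alpha)}_-)=0$; writing the expansion once at $\bar u_- = u^{(\updelta,T,\alpha)}_-$ with $u_- = u^{(\upsigma,\alpha)}_-$ (extended as a constant-in-time function) and once at $\bar u_- = u^{(\upsigma,\alpha)}_-$ with $u_- = u^{(\updelta,T,\alpha)}_-$, and then adding the two identities, the functional values cancel and the first-order term at $u^{(\updelta,T,\alpha)}_-$ vanishes, leaving
\[
2\, Q_\alpha\bigl(u^{(\updelta,T,\alpha)}_- - u^{(\upsigma,\alpha)}_-\bigr) = \bigl\langle D\tilde J_\alpha(u^{(\upsigma,\alpha)}_-),\, u^{(\upsigma,\alpha)}_- - u^{(\updelta,T,\alpha)}_-\bigr\rangle_{L^2(0,T)}.
\]
Combining coercivity with Cauchy--Schwarz then yields
\[
2\lambda\,\bigl\|u^{(\updelta,T,\alpha)}_- - u^{(\upsigma,\alpha)}_-\bigr\|_{L^2(0,T)} \leq \bigl\|D\tilde J_\alpha(u^{(\upsigma,\alpha)}_-)\bigr\|_{L^2(0,T)}.
\]

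Finally I would invoke Lemma~\ref{lemma2016minus} applied with $u^{(s,T)} = (\alpha,u^{(\upsigma,\alpha)}_-)$: the left-hand side of estimate (\ref{c4assertionminus}) is precisely $D\tilde J_\alpha$ evaluated at the constant extension of the static optimal control, so $\|D\tilde J_\alpha(u^{(\upsigma,\alpha)}_-)\|_{L^2(0,T)}\leq C_E$ with $C_E$ independent of $T$. Setting $\bar C := C_E^2/(4\lambda^2)$ and dividing by $T$ then produces (\ref{uniformbounded09122016minus}). Because ${\cal F}$ is a finite set, the constant $C_E$ can be made independent of $\alpha$ simply by taking the maximum of the finitely many constants produced by Lemma~\ref{lemma2016minus} as $\alpha$ ranges over ${\cal F}$.

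The main obstacle is not the variational identity itself, which is an essentially mechanical adaptation of the argument used for Theorem~\ref{satz1}, but rather ensuring that the constant in (\ref{c4assertionminus}) is genuinely uniform with respect to $\alpha$; this is handled by finiteness of ${\cal F}$, which is assumed from the outset. The coercivity constant $\kappa=\lambda$ is delivered explicitly by the penalty structure in (\ref{objectiveneu2018}), which removes any need to revisit the general positive-definiteness argument of the unconstrained case.
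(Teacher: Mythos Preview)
Your proposal is correct and follows essentially the same approach as the paper: introduce the reduced quadratic functional with $u_+\equiv\alpha$ fixed, use the two--point expansion together with the optimality condition $D\tilde J_\alpha(u^{(\updelta,T,\alpha)}_-)=0$, exploit the explicit coercivity constant $\lambda$, and bound the gradient at the static control via Lemma~\ref{lemma2016minus}. The only slip is a harmless factor of $2$: with $\tilde J_\alpha$ as in (\ref{objectiveneu2018}) the actual gradient is $2\bigl(\lambda\bar u_-+(1-\lambda)(F_T^\ast(F_T(\alpha,\bar u_-)-R^{\ourdesi}))_-\bigr)$, so your displayed Taylor identity and the final constant $\bar C$ are off by a fixed multiplicative factor, but this does not affect the argument or the conclusion.
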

\begin{proof}
%
Let $u_+(t)=\alpha\in {\mblue \cal F}$ be given.
For the objective functional   of the dynamic optimal control problem
(\ref{ocplambdaminus}) with the  representation as in (\ref{09122016a})
we introduce the notation
\[
\hat J(u_-)=
(1 - \lambda)\,
\|
F_T\, ((\alpha,\, u_-)) - R^{\ourdesi}
\|_{ H }^2
+
\lambda\,
\|(\alpha,\, u_-)\|_{ H }^2.
\]
Let $X_- = L^2(0,T)$.
For all  $u_-$ and $\bar u_-\in X_-$,
we can represent $\hat J$ in the form
\[
\hat J( u_-)=
\hat J( \bar u_-)
+
2\,
\langle
\lambda\, \bar u_- + (1 - \lambda)\,  \left( F_T^\ast\left(F_T(\alpha, \bar u_-) - R^{\ourdesi} \right)\right)_-,
\,\,
u_- - \bar u_-
\rangle_{X_-}
\]
\[
+(1 - \lambda)\,
\| \left(F_T ( (0, \, u_- - \bar u_- )) \right)_-\|^2_{H}  + \lambda\, \| u_- - \bar u_-\|^2_{X_-}
.
\]
With the notation
\[
D\hat J(\bar u_-)
=
2\, \left[\lambda\, \bar u_- + (1 - \lambda)\, \left(F_T^\ast\left(F_T(\alpha,\, \bar u_-) - R^{\ourdesi} \right)
\right)_- \right]
\]
we have
\[
\hat J( u_- )=
\hat J( \bar u_-)
+
\langle
D\hat J(\bar u_-)
,
\,
u_- - \bar u_-
\rangle_{X_-}
\]
\[
+
(1 - \lambda)\,\|\left(F_T ((0,\, u_- - \bar u_-) )\right)_-\|^2_{H}  +  \lambda\, \|u_- - \bar u_-\|^2_{X_-}.
\]
For the optimal control
$u_-^{(\updelta,\, T,\, \alpha)}$,
the necessary optimality condition
(\ref{optimalitysystemminus})
implies that $D\hat J(\bar u_-^{(\updelta,\,T,\, \alpha )})=0$.
Hence for  $u^{(s,\, T)}$
as defined in (\ref{110120171423})
 we have
\begin{equation}
\label{tildeudefinitionentwicklunglambdaminus}
\hat J( u^{(s,\, T)}_- )=
\hat J(u^{(\updelta,\, T,\,\alpha)}_-)
+
(1 - \lambda)\,\|F_T (0,\, u^{{(s,\, T)}}_-   - u^{(\updelta,\, T,\, \alpha)}_- )\|^2_{H}
 +  \lambda\, \|u^{(s,\, T)}_- -  u_-^{(\updelta,\, T,\,\alpha)}\|^2_{X_-}.
\end{equation}
%
As in the proof of  Theorem \ref{satz1}
using the necessary optimality condition
(\ref{optimalitysystemminus})
we obtain
\begin{equation}
\label{abbruchminus}
\| u^{(\updelta,\,T,\,\alpha)}_- -  u^{(s,\, T)}_-  \|_{X_-}
\leq
\frac{1}{2\, \lambda } \, \| D\hat J( u^{(s,\, T)}_- ) \|_{X_-}.
\end{equation}
In order to use (\ref{abbruchminus}) to prove (\ref{uniformbounded09122016minus}),
we need an upper bound for
\begin{equation}
\label{09212016bminus}
\frac{1}{2 } \,\| D\hat J( u^{(s,\, T)}_- ) \|_{X_-}
=
\left\|\lambda\,  u^{(s,\, T)}_- + (1 - \lambda)\,
\left[F_T^\ast\left(F_T (\alpha,\, u^{(s,\, T)}_-) - R^{\ourdesi} \right)\right]_- \right\|_{X_-}.
\end{equation}
Due to inequality (\ref{abbruchminus})
and equation (\ref{09212016bminus}),
 with the
choice $\bar C = \frac{C_E}{\lambda}$,
inequality (\ref{c4assertionminus}) from
Lemma \ref{lemma2016minus}
implies (\ref{uniformbounded09122016minus}).
Thus we have proved  Lemma \ref{satz1minus}.
\end{proof}
Now we prove Theorem \ref{satzintegerturnpike}.
For the optimal dynamic control at $x=0$,
due to the integer constraint (\ref{21122016}) we do not have optimality conditions,
so we have to use the fact that ${\cal F}$ is a finite set in the arguments.

{\bf Proof of  Theorem \ref{satzintegerturnpike}:}
If $\omega(T)=0$, the assertion follows immediately,
since both the solution of the dynamic and the solution of
the static problem are zero.

Assume that  $\omega(T)>0$.
We  show that if $T$ is sufficiently large,
the plus--component of
a solution of
the dynamic optimal control problem
 (\ref{ocplambdaswitch})
 also appears in a solution of
  the static problem
(\ref{ocplambdastaticswitch}).
For this purpose we consider the objective function.

For $\alpha \in {\cal F}$,
$u_-\in L^2(0,\,T)$ and $v\in  {\mathbb R}$
we introduce the notation
\[
\hat J_\alpha( u_- ) = (1 - \lambda)\,
\|
F_T\, ((\alpha,\, u_-)) - R^{\ourdesi}
\|_{ H}^2
+
\lambda\,
\|(\alpha,\, u_-)\|_{ H}^2,
\]
\[
\hat J_{0,\, \alpha}( v ) =
\lambda\,
\|(\alpha,\,v)^\top\|_{ {\mathbb R}^2}^2
+
(1 - \lambda)\,
\|
F_{(\upsigma)}\, (\alpha,\, v)^\top  - R^{\ourdesi}
\|_{{\mathbb R}^2  }^2
\]
and  $u^{(s,\, T,\, \alpha)}$ instead of
$u^{(s,\, T)}$
as defined in (\ref{110120171423})
in order to clarify the dependence on $\alpha \in {\cal F}$.
Then we can write
(\ref{17022017a})
in the form
\begin{equation}
\label{17022017b}
\left\|\left( F_T - F_{(\upsigma)}\right) \, u^{(s,\, T,\, \alpha)} \right\|_{H}\leq C_4
.
\end{equation}
%
We have
\begin{equation}
\label{09062017ar}
\omega(T) = \min_{\alpha \in {\cal F}} \hat J_\alpha( u_-^{(\updelta,\, T,\,\alpha)} )
\leq \min_{\alpha \in {\cal F}} \hat  J_\alpha( u_-^{(s,\, T,\, \alpha)} ).
\end{equation}
Inequality (\ref{uniformbounded09122016minus})
implies
$
\|u^{(\updelta,\, T,\, \alpha)} - u^{(s,\, T,\, \alpha)}\|_{H}\leq \sqrt{\bar C}
$.
%
Thus due to (\ref{tildeudefinitionentwicklunglambdaminus})  and
 (\ref{ftnorm}) we have
\begin{equation}
\label{16022017d}
\min_{\alpha \in {\cal F}}
\hat J_\alpha( u^{(s,\, T,\, \alpha)}_- )
\leq
\omega(T)
+
(1 - \lambda) \,
C_N^2 \, {\bar C} +  \lambda\, {\bar C}.
 \end{equation}
 Hence
 \[
 \omega(T)
 \geq
 \min_{\alpha \in {\cal F}}
\hat J_\alpha( u^{(s,\, T,\, \alpha)}_- )
-
(1 - \lambda) \,
C_N^2 \, {\bar C}
-  \lambda\, {\bar C}.
\]
%

Again $u^{(\upsigma,\,\alpha)}$ denotes
a  static optimal control that solves
(\ref{ocplambdastaticswitchuplusgiven}).
For all $\alpha \in {\cal F}$ we have
\begin{eqnarray}
\label{16022017a}
& & \hat J_\alpha( u^{(s,\, T,\, \alpha)}_- )
\\
\nonumber
& = &
T \,\lambda \, \|(\alpha,\, u_-^{(\upsigma,\,\alpha)} )^\top\|_{ {\mathbb R}^2}^2
+
(1 - \lambda) \, \|
F_T\, u^{(s,\, T,\, \alpha)} - R^{\ourdesi}
\|_{ H}^2
\\
\nonumber
& = &
T \,\lambda \, \|(\alpha,\, u_-^{(\upsigma,\,\alpha)} )^\top\|_{ {\mathbb R}^2}^2
+(1 - \lambda) \, \|
\left(F_T
-
 F_{(\upsigma)} + F_{(\upsigma)} \right)
\, u^{(s,\, T,\, \alpha)} - R^{\ourdesi}
\|_{  H }^2
\\
\nonumber
& \leq &
T \,\lambda \, \|(\alpha,\, u_-^{(\upsigma,\,\alpha)} )^\top\|_{ {\mathbb R}^2}^2
\\
\nonumber
& &
+(1 - \lambda) \,
\left(\|
\left(F_T
-
 F_{(\upsigma)} \right)
\, u^{(s,\, T,\, \alpha)}
\|_{ H }
+
\| F_{(\upsigma)}\, u^{(s,\, T,\, \alpha)} - R^{\ourdesi} \|_{ H }
\right)^2
\\
\nonumber
& \leq &
T\, \hat J_{0,\, \alpha}(u_-^{(\upsigma,\,\alpha)})
+(1 - \lambda)\, \left( C_4^2 +
2 \, C_4\, \| F_{(\upsigma)}\, u^{(s,\, T,\, \alpha)} - R^{\ourdesi} \|_{  H }
\right)
\\
\nonumber
& = &
T\, \hat J_{0,\, \alpha}(u_-^{(\upsigma,\,\alpha)} )
+(1 - \lambda)\, \left( C_4^2 +
2 \, C_4\, \sqrt{T} \, \| F_{(\upsigma)}\,u_-^{(\upsigma,\,\alpha)} - R^{\ourdesi} \|_{{\mathbb R}^2}
\right).
\end{eqnarray}
Define
\[
\upsilon =  \min_{\alpha \in {\cal F}} \hat  J_{0,\, \alpha}( u_-^{(\upsigma,\,\alpha)} ).
\]
Note that the number  $\upsilon$ is independent of $T$ and
equal to the optimal value of  the static problem (\ref{ocplambdastaticswitch}).
Due  (\ref{09062017ar}),
 (\ref{16022017a}) implies
\begin{equation}
\label{17022017c}
T \, \upsilon  \geq
\omega(T) -  (1 - \lambda)\, \left( C_4^2 + 2\,C_4\, \sqrt{T} \, \| F_{(\upsigma)}\,u_-^{(\upsigma,\,\alpha)} - R^{\ourdesi} \|_{{\mathbb R}^2}\right)
.
\end{equation}
Moreover we  have
\begin{eqnarray}
\label{16022017b}
& & \hat J_\alpha( u^{(s,\, T,\, \alpha)}_- )
\\
\nonumber
& = &
T \,\lambda \, \|(\alpha,\, u_-^{(\upsigma,\,\alpha)} )^\top\|_{ {\mathbb R}^2}^2
+(1 - \lambda) \, \|
\left(F_T
-
 F_{(\upsigma)} + F_{(\upsigma)} \right)
\, u^{(s,\, T,\, \alpha)} - R^{\ourdesi}
\|_{ H }^2
\\
\nonumber
& \geq &
T \,\lambda \, \|(\alpha,\, u_-^{(\upsigma,\,\alpha)} )^\top\|_{ {\mathbb R}^2}^2
\\
\nonumber
& &
+(1 - \lambda) \, \|
\left(\|
\left(F_T
-
 F_{(\upsigma)} \right)
\, u^{(s,\, T,\, \alpha)}
\|_{ H }
-
\| F_{(\upsigma)}\, u^{(s,\, T,\, \alpha)} - R^{\ourdesi} \|_{ H }
\right)^2
\\
\nonumber
& \geq &
T\, \hat J_{0,\, \alpha}( u_-^{(\upsigma,\,\alpha)} )
- (1 - \lambda)\,
2 \, C_4\, \| F_{(\upsigma)}\, u^{(s,\, T,\, \alpha)} - R^{\ourdesi} \|_{ H }
\\
\nonumber
& = &
T\, \hat J_{0,\, \alpha}( u_-^{(\upsigma,\,\alpha)} )
 - 2 \, (1 - \lambda)\,  C_4\, \sqrt{T} \, \| F_{(\upsigma)}\,u_-^{(\upsigma,\,\alpha)} - R^{\ourdesi} \|_{{\mathbb R}^2}
.
\end{eqnarray}

Choose  $\alpha \in \cal{F}$  that is {\em not} optimal for the
 static problem. Then
$
\hat J_{0,\, \alpha}(u_-^{(\upsigma,\,\alpha)} ) = \upsilon +  \epsilon(\alpha)
$
with
$ \epsilon(\alpha)=
\hat J_{0,\, \alpha}(u_-^{(\upsigma,\,\alpha)} )
-
 \upsilon > 0$.
Suppose that $u^{(\updelta,\, T,\,\alpha)}$ is
a solution of the dynamic problem  (\ref{ocplambdaswitch}).
Then due to (\ref{tildeudefinitionentwicklunglambdaminus}) we have
\[
\omega(T) =  \hat J_\alpha(u^{(\updelta,\, T,\,\alpha)}_-)
\]
\[
\geq
\hat J_\alpha( u^{(s,\, T,\, \alpha)}_- )
-
(1 - \lambda)\,\|F_T ( u^{{(s,\, T,\, \alpha)}}   - u^{(\updelta,\, T,\, \alpha)} )\|^2_{H}
 -  \lambda\, \|u^{(s,\, T,\, \alpha)} -  u^{(\updelta,\, T,\,\alpha)}\|^2_{H}
 \]
 \[
 \geq
 \hat J_\alpha( u^{(s,\, T,\, \alpha)}_- )
- ((1 - \lambda) \, C_N^2 \,  +  \lambda) \, {\bar C}
.
\]
Due to
(\ref{16022017b})  and
(\ref{17022017c})
this implies that
\begin{eqnarray*}
\omega(T)
&
 \geq
 &
 T\, \hat J_{0,\, \alpha}( u_-^{(\upsigma,\,\alpha)} )
 - 2 \, (1 - \lambda)\,  C_4\, \sqrt{T} \, \| F_{(\upsigma)}\,u_-^{(\upsigma,\,\alpha)} - R^{\ourdesi} \|_{{\mathbb R}^2}
  - ((1 - \lambda) \, C_N^2 \,  +  \lambda) \, {\bar C}
  \\
  & = &
 T ( \upsilon +  \epsilon(\alpha))
  - 2 \, (1 - \lambda)\,  C_4\, \sqrt{T} \, \| F_{(\upsigma)}\,u_-^{(\upsigma,\,\alpha)} - R^{\ourdesi} \|_{{\mathbb R}^2}
  - ((1 - \lambda) \, C_N^2 \,  +  \lambda) \, {\bar C}
  \\
&  \geq &
\omega(T) -  (1 - \lambda)\, \left( C_4^2 + 2\,C_4\, \sqrt{T} \, \| F_{(\upsigma)}\,u_-^{(\upsigma,\,\alpha)} - R^{\ourdesi} \|_{{\mathbb R}^2}\right)
 + T \,\epsilon(\alpha)
 \\
 & - &
 2 \, (1 - \lambda)\,  C_4\, \sqrt{T} \, \| F_{(\upsigma)}\,u_-^{(\upsigma,\,\alpha)} - R^{\ourdesi} \|_{{\mathbb R}^2}
  - ((1 - \lambda) \, C_N^2 \,  +  \lambda) \, {\bar C}.
\end{eqnarray*}
Since $\epsilon(\alpha)>0$, for sufficiently large $T$ we have
\[
T\, \epsilon(\alpha)
-
 4\,  \sqrt{T} \, (1 - \lambda)\,  C_4\, \| F_{(\upsigma)}\,u_-^{(\upsigma,\,\alpha)} - R^{\ourdesi} \|_{{\mathbb R}^2}
>
  (1 - \lambda)\, C_4^2
%
  + ((1 - \lambda) \, C_N^2 \,  +  \lambda) \, {\bar C}.
\]
But this yields $\omega(T) > \omega(T)$ which is a contradiction.
Hence if $T$ is sufficiently large,
$u^{(\updelta,\, T,\,\alpha)}$ cannot be a solution of the dynamic problem  (\ref{ocplambdaswitch}).
This implies that
for all solutions of
the dynamic optimal control problem with integer constraints (\ref{ocplambdaswitch}),
the plus-component $\alpha$ is such that
we have
$\hat J_{0,\, \alpha}(u_-^{(\upsigma,\,\alpha)} )
= \upsilon$,
that is $\alpha $  is the first component of a solution of the static problem
(\ref{ocplambdastaticswitch}).

Hence under the assumptions of
Theorem \ref{satzintegerturnpike},
$\alpha\in {\mblue \cal F}$
can be chosen
such that both the plus-component
of the solution of the dynamic optimal control problem
(\ref{ocplambdaswitch})
and the
plus-component
solution of the static optimal control problem
(\ref{ocplambdastaticswitchuplusgiven})
are equal to $\alpha$.
Therefore  Lemma \ref{satz1minus} implies
(\ref{uniformbounded09122016minushaupt}) for all $T>0$
for some $\alpha \in {\cal F}$
if $u^{(\upsigma)}=u^{(\upsigma,\, \alpha)}$ is chosen as
a  static optimal control that solves
(\ref{ocplambdastaticswitchuplusgiven}).
Thus we have proved Theorem \ref{satzintegerturnpike}.

\section{Application to gas pipeline operation}\label{applicationsection}
The motion of
gas in a long high-pressure pipeline can be modeled with the
one-dimensional
isothermal Euler equations
	\begin{equation}\label{eq:ISO1}
			\partial_t \varrho 		+\partial_x(\varrho v) 		
= 0,									\;\;
			\partial_t (\varrho v)	+\partial_x (p+\varrho v^2)	
= -\theta\varrho v |v| - g\varrho h',
	\end{equation}
where $\varrho$ denotes the density,
$v$ the velocity,
$p$ the pressure of the gas,
 $g$ the gravitational constant, $h'$ the slope of the pipe and $\theta$ is a friction coefficient. The mass flux per cross sectional area is then~$q = \varrho v$ in $\si{\flux}$.
An ideal gas 
yields a constant speed of sound $c = \sqrt{p / \varrho}$.
Since $p + \varrho v^2 = p(1+\frac{v^2}{c^2})$
for small velocities $|v| \ll c$ system \eqref{eq:ISO1}
can be approximated by
	\begin{equation}\label{eq:ISO2}
			\partial_t \varrho	+\partial_x q 			
= 0, \;\;
			\partial_t q 		+c^2\partial_x \varrho	
= -\theta {q |q|}/{\varrho} - gh' \varrho.
	\end{equation}
For further modeling details, we refer to \cite{bandahertyklar, gasserherty, guherty, gudil}.
The pipelines are usually operated near stationary states given by a constant flow $q\equiv\bar{q}$ and a density
distribution $\bar\rho$ in the pipe given by solution of the ordinary differential equation
	\begin{equation}\label{eq:ISO2-stationary}
		\begin{aligned}
			c^2\partial_x \varrho	&= -\theta{q |q|}/{\varrho} - gh' \varrho.
		\end{aligned}
	\end{equation}
A typical control problem for transmission system operators is to transfer the flow and pressure regime from one stationary state $(\bar{q}^0,\bar\rho^0)$ to a particular desired one
$(\bar{q}^T,\bar\rho^T)$ by choosing appropriate pressure and/or flow conditions at the entry and exit of the pipeline \cite{controllabilitynodalprofile}.
Prototypically, we consider the situation that $(\bar{q}^0,\bar\rho^0)$ is uniquely determined from \eqref{eq:ISO2-stationary} by known $q_L^0$ and $\rho_L^0$ at the exit $x=L$ and shall be transfered to $(\bar{q}^T,\bar\rho^T)$
that again is determined by \eqref{eq:ISO2-stationary} for certain desired $q_L^T$ and $\rho_L^T$ at $x=L$.
Therefore, we consider minimizing the following tracking type cost function
\begin{equation}\label{eq:gascost}
 \hat{J}=\int_0^T |\rho(t,L)-\rho_L^T|^2+\alpha\,|q(t,L)-q_L^T|^2\,dt+\lambda \int_0^T \rho(t,0)^2+\beta\,q(t,0)^2\,dt
\end{equation}
for some $\alpha,\beta,\lambda>0$ subject to a linearization of \eqref{eq:ISO2-stationary} at $(\bar{q}^0,\bar{\rho}^0)$. We show that this problem
can be analyzed with the techniques presented above and that the turnpike phenomenon obtained from Theorem~\ref{satz1} can also be verified numerically here.

In vector form $y=(\rho,q)^\top$, \eqref{eq:ISO2} can be written as
\begin{equation}\label{eq:ISO2vec}\partial_t y + A \partial_x y = G(y),\quad A=\begin{pmatrix}0 & 1\\c^2 & 0\end{pmatrix},~G(y)=\begin{pmatrix} 0 \\ -\theta\frac{q |q|}{\varrho} - gh' \varrho \end{pmatrix}. \end{equation}
The matrix $A$ has the eigenvalues $\lambda_1=c$ and $\lambda_2=-c$ with the corresponding left and right eigenvectors
$l_1=\begin{pmatrix} c & 1 \end{pmatrix}$, $l_2=\begin{pmatrix}-c & 1\end{pmatrix}$, $r_1=\begin{pmatrix}\frac{1}{c} & 1\end{pmatrix}^\top$, $r_2=\begin{pmatrix}-\frac{1}{c} & 1\end{pmatrix}^\top$.
Multiplying \eqref{eq:ISO2vec} by $l_1$ and $l_2$ yields a system in diagonal form
\begin{equation}\label{eq:ISO2diag}
R_t + D\, R_x = F(R),
 \;\; \mbox{\rm with }\quad  d_+(x)= c,\; d_-(x)=-c
\end{equation}
in variables $R=(R_+,R_-)^\top=(l_1 y,l_2 y)^\top=(c\rho +q,-c\rho+q)^\top$ with
\[F(R_+,R_-)=\begin{pmatrix}l_1 G(y) \\ l_2 G(y)\end{pmatrix}= - \left[\frac12 \theta c \frac{(R_+ + R_-)|R_+ + R_-|}{R_+ - R_-}+gh' \frac{R_+-R_-}{2c}\right] \begin{pmatrix}1 \\ 1\end{pmatrix}.\]
The original coordinates are obtained from $R$ using
\begin{equation}\label{eq:invtrafo}
\rho={(R_+-R_-)}/{2c},\quad q={(R_+ + R_-)}/{2}.\end{equation}
The linearization of \eqref{eq:ISO2}
at $(\bar{q}^0,\bar{\rho}^0)$ corresponds to a linearization of \eqref{eq:ISO2diag}
at $\bar R = (\bar R^+, \bar R^-)^\top=(c\bar\rho^0+\bar q^0,-c\bar \rho^0+\bar q^0)^\top$ and yields a system of the form
\begin{equation}\label{eq:ISO2diaglin}
  r_t + D  r_x = \eta M r
\end{equation}
with $\eta=-1$ and $M=-F'(\bar{R})$ in the variables $r=R-\bar{R}$.
Moreover, the linear transformations \eqref{eq:invtrafo} and $r=R-\bar{R}$ used in \eqref{eq:gascost} yield a
quadratic cost function
 of the type
 \eqref{objectiveneu}.
Theorem~\ref{satz1} therefore applies.

In order to verify this numerically, we discretized \eqref{eq:ISO2diaglin} using finite differences with a first order explicit in time and implicit in space upwind scheme with
$N_x=40$ discretization points in space, $N_t=816$ discretization points in time and the trapezoidal rule for the integration in \eqref{eq:gascost}. The same spatial discretization was used
for the corresponding stationary optimal control problem. The discretized problems were both implemented in GAMS \cite{GAMS2014} and solved using the interior point method
IPOPT \cite{IPOPT}. The parameters for a numerical example are listed in Table~\ref{tab:parameters}. The numerical results for this example are presented in Figure~\ref{fig:numresults}
and show that the dynamic optimal solution is very close to the stationary solution for about two thirds of the considered time horizon. This justifies using for example
feedback stabilization techniques to an optimal stationary state as a simple control principle as an alternative to solving a very difficult dynamical optimal control problem for
gas pipeline operation.


\begin{table}
\label{tab:parameters}
\caption{Choice of parameters for the numerical results in Figure~\ref{fig:numresults}}
{\small
\begin{center}
\begin{tabular}{llll}
\hline
Symbol & Explanation & Chosen Value & Unit\\
\hline
$\theta$ & friction factor & 0.05 & $\si{\friction}$\\
$g$ & gravitational constant & 9.81 & $\si{\gravity}$\\
$h'$ & pipe slope & 0.025 & ---\\
$c$ & speed of sound & 340 & $\si{\speed}$\\
$L$ & length of pipe & 10\,000 & $\si{\length}$\\
$T$ & length of time horizon & 600  & $\si{\time}$\\
$\rho^0_L$ & initial density at the exit& 35 & $\si{\density}$\\
$q^0_L$ & initial flux at the exit & 400 & $\si{\flux}$\\
$\rho^T_L$ & desired density at the exit & 40 & $\si{\density}$\\
$q^T_L$ & desired flux at the exit& 400 & $\si{\flux}$\\
$\alpha,\beta,\lambda$ & weighting factors in cost function & 0.01,0.01,0.1111 & ---\\
\hline
\end{tabular}
\end{center}
}
\end{table}

\begin{figure}
\caption{The computed optimal flow $q$ (left) and  optimal dynamic density $\rho$ (middle) and the relative error $e_\rho$ and $e_q$ compared to the optimal stationary state for the boundary trace at the entry (right)}
\begin{center}
\includegraphics[width=0.33\textwidth]{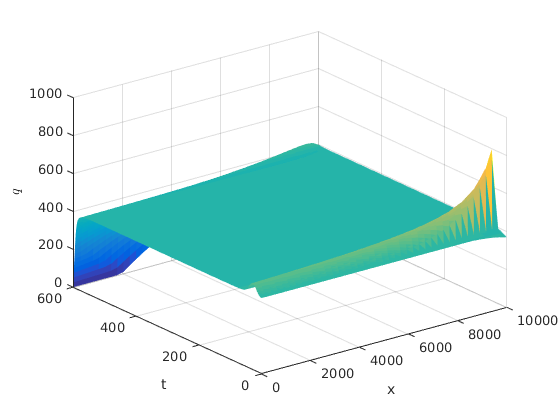}\hspace*{-0.15cm} \includegraphics[width=0.33\textwidth]{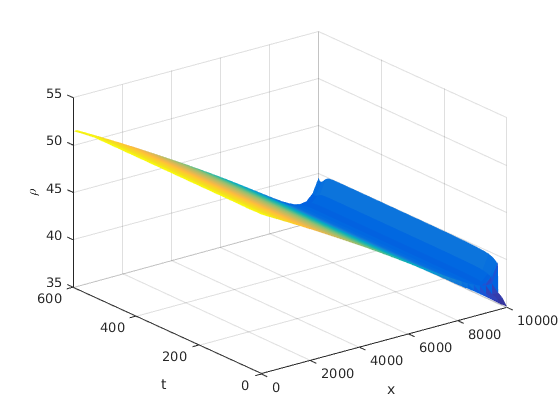} \hspace*{-0.15cm} \includegraphics[width=0.33\textwidth]{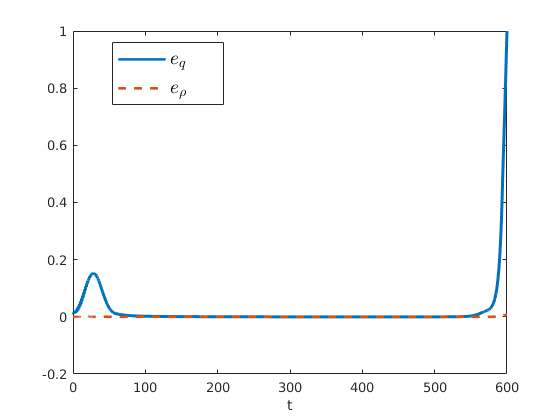}
\end{center}
\label{fig:numresults}
\end{figure}

\section{Conclusions}
\label{conclusions}

We have  shown that  controls that solve
optimal boundary control problems with linear hyperbolic systems
have a turnpike structure
in the sense that  the
$L^2$--norm of
the difference between the static optimal
control  and the dynamic optimal control
remains uniformly bounded for arbitrarily
long control times $T$.
Since the static optimal control is
constant with respect to time,
this means that
the dynamic optimal control
must
approach this
constant with increasing control time $T$
almost everywhere
on  the time interval $[0,\, T]$.
We have also given sufficient
conditions for the  turnpike phenomenon
for
optimal boundary control problems with
an additional integer constraint.
In this case the static problem is an
optimization problem with an integer constraint
and the turnpike phenomenon occurs
if
both
the switching cost
and the time interval $[0,\, T]$
are sufficiently
large.
It is not clear, if also for smaller
penalty parameters in the switching penalization
a turnpike phenomenon arises.
This is a question for future research.
%
%
%
%
Our results give  important insights
about the relation between the solutions
of the dynamic optimal boundary  control problems
and the corresponding static optimal control problem.
%
%
The results imply that for sufficiently large
control times, the static optimal controls
yield reasonable approximations
for the dynamic optimal boundary controls.


%

\medskip
\noindent{\bf Acknowledgment.}
This work is  supported by DFG
in the
Collaborative Research Centre
CRC/Transregio 154, Mathematical Modelling, Simulation and Optimization Using the Example of Gas Networks, project A03 and C03.

\bibliographystyle{siam}

\begin{thebibliography}{99}

\bibitem{adams}
R. A. Adams, J. J. F. Fournier,
\newblock {\em  Sobolev Spaces},
\newblock Academic Press, 2003.


\bibitem{bandahertyklar}
 \newblock M. K. Banda, M. Herty and A. Klar,
 \newblock \emph{Coupling conditions for gas networks governed by the isothermal Euler equations},
 \newblock Networks and Heterogenous Media 1,  (2006), 295-314.


\bibitem{BastinCoron2016}
G. Bastin, J.-M. Coron,
Stability and boundary stabilization of 1-d hyperbolic systems.
Birkh\"auser, Basel, Switzerland, 2016.


\bibitem{onlyapunov}
G. Bastin, J.-M. Coron, B. d'Andrea-Novel,
\newblock
\emph{On Lyapunov stability of linearised Saint-Venant equations for a sloping channel},
Networks and Heterogeneous Media  4, 177--187, 2009.



%


\bibitem{gasserherty}
J. Brouwer, I. Gasser, M. Herty,
\newblock \emph{Gas Pipeline Models Revisited: Model Hierarchies, Nonisothermal Models, and Simulations of Networks},
\newblock   Multiscale Model. Simul., 9, 601--623, 2011.


\bibitem{coron}
J.-M. Coron,
\newblock{\em Control and Nonlinearity},
\newblock AMS, Providence, Rhode Island, 2007.


\bibitem{dammgruene}
T. Damm, L. Gr\"une, M. Stieler, K. Worthmann,
\newblock \emph{An exponential turnpike theorem for dissipative discrete time optimal control problems},
\newblock SIAM J. Cont. Optim., 52, 1935--1957, 2014.

\bibitem{samuelson}
R. Dorfman, P.A. Samuelson, R. M. Solow,
\newblock \emph{Linear Programming and Economic Analysis},
\newblock New York: McGraw-Hill, 1958.


\bibitem{faulwasser}
Faulwasser, T., Korda, M., Jones, C. N., Bonvin, D.,
\newblock \emph{On turnpike and dissipativity properties of continuous-time optimal control problems},
\newblock Automatica, 81, 297-304, 2017.



\bibitem{GAMS2014}
GAMS Development Corporation,
\newblock \emph{General Algebraic Modeling System ({GAMS})},
\newblock 2014, Release 24.3.3, Washington, DC, USA.

\bibitem{gruene}
L. Gr\"une, S. Pirkelmann, M. Stieler,
\newblock \emph{
Strict dissipativity implies turnpike behavior for time-varying discrete time optimal control problems}
\newblock  Preprint, Bayreuth, 2017.


\bibitem{guherty}
M. Gugat, M. Herty,
\newblock \emph{Existence of classical solutions and feedback stabilization for the flow in gas networks},
\newblock {\em ESAIM: COCV, 17 (2011) 28--51.
}

\bibitem{controllabilitynodalprofile}
M. Gugat, M. Herty, V. Sacher,
\newblock
\emph{Flow control in gas networks: exact controllability to a given demand}.
\newblock {\em
 Mathematical Methods in the Applied
Sciences 34, (2011), 745--757.}


\bibitem{gudil}
M. Gugat, M. Dick and G. Leugering,
\newblock \emph{Gas Flow in Fan-Shaped Networks: Classical Solutions and Feedback Stabilization},
\newblock  SIAM J. Control Optim., 49, 2101--2117, 2011.



\bibitem{gutrezu}
M. Gugat, E. Tr\'elat, E. Zuazua,
\newblock\emph{Optimal Neumann control for the 1D wave equation: Finite horizon, infinite horizon, boundary tracking terms and the turnpike property.}
\newblock {\em
Systems and Control Letters 90, 61-70, (2016). https://dx.doi.org/10.1016/j.sysconle.2016.02.001.}



\bibitem{guzu}
M. Gugat,  E. Zuazua,
\newblock {\emph Exact penalization of terminal constraints for optimal control problems},
\newblock {\em Optimal Control Applications and Methods}, {1099-1514}, {2016}.

\bibitem{Hante2017}
F.~M. Hante,
\newblock Relaxation Methods for Hyperbolic {PDE} Mixed-Integer Optimal Control Problems.
\newblock {\em Optimal Control Applications and Methods},  Vol. 38(6), 1103--1110, 2017.

\bibitem{isiam}
F.~M. Hante et al.,
\newblock {Challenges in optimal control problems for gas and fluid flow in networks of pipes and canals: From modeling to industrial applications},
\newblock in {\em Industrial Math. and Complex Sys.}, P. Manchanda et al. (Eds.), Springer Singapore, 77--122, 2017.

\bibitem{HLS} F.~M. Hante, G. Leugering, T.~I. Seidman,
\newblock \emph{Modeling and Analysis of Modal Switching in Networked Transport Systems},
\newblock Appl. Math.  Opt.  59, 275--292, 2009.

\bibitem{hasan}
A. Hasan, L. Imsland, I. Ivanov, S. Kostova, B. Bogdanova,
\newblock {\em Optimal Boundary Control of 2x2 Linear Hyperbolic PDEs},
\newblock {2016 24th Mediterranean Conf.  Cont.  Automat. (MED)},
DOI: 10.1109/MED.2016.7536012.


\bibitem{higdon}
R. L. Higdon,
\newblock \emph{Initial-Boundary Value Problems for Linear
Hyperbolic Systems},
\newblock SIAM Rev. 28, 1986.

\bibitem{hoermander}
{L. H\"ormander},
\newblock {\em Lectures on Nonlinear hyperbolic Differential Equations},
\newblock Springer, Paris, 1997.


\bibitem{kunisch}
I. Kazufumi, K. Kunisch,
\newblock
Multiplier approach to variational problems and applications
\newblock {\em
Advances in Design and Control, Vol. 15, SIAM, Philadelphia, PA (2008)
}

\bibitem{lasiecka}
I.  Lasiecka  and R. Triggiani,
\newblock
Control Theory for Partial Differential Equations,
Volume 2, Abstract Hyperbolic-like Systems over a Finite Time Horizon,
\newblock {\em Cambridge University Press, Cambridge, UK, 2000 }




\bibitem{leoni}
{G. Leoni},
\newblock{\em A first Course in Sobolev Spaces},
\newblock American Mathematical Society, Providence, Rhode Island, 2009.



\bibitem{porretta}
A. Porretta and E. Zuazua,
 \newblock \emph{Long Time versus Steady State Optimal Control},
 \newblock  SIAM J. Control and Optimization 51, 4242--4273, 2013.


\bibitem{porretta2}
A. Porretta and E. Zuazua,
 \newblock Remarks on Long Time Versus Steady State Optimal Control,
 \newblock in \emph{Mathematical Paradigms of Climate Science} 15,  Springer INdAM Series, 67--89, 2016.

\bibitem {trezu}
E. Tr\'elat,   E. Zuazua,
\newblock \emph{The turnpike property in finite-dimensional nonlinear optimal control},
\newblock J. Differential Equations 258, 81--114 (2015).

\bibitem {trezuzhang}
E. Tr\'elat, C. Zhang, E. Zuazua,
\newblock
\emph{Steady-state and periodic exponential turnpike
property for optimal control problems in Hilbert
spaces}
\newblock {\em
(arXiv:1610.01912)
}

\bibitem {trelat}
E. Tr\'elat, C. Zhang,
\newblock
\emph{Integral and measure-turnpike properties for infinite-dimensional optimal control systems},
Mathematics of Control, Signals, and Systems 30.3, 2018.




\bibitem{trezushape}
E. Tr\'elat,  C. Zhang,  E. Zuazua,
 \newblock \emph{Optimal shape design for 2D heat equations in large time},
  \newblock 2017,  hal-01442997.


\bibitem{IPOPT} A. W\"achter, L. T. Biegler,
\newblock \emph{On the Implementation of a Primal-Dual Interior Point Filter Line Search Algorithm for Large-Scale Nonlinear Programming},
\newblock Math. Progr. 106, 25--57, 2006.


\bibitem{zaslavski}
A. Zaslavski,
\newblock \emph{Existence and Structure of Optimal Solutions of Infinite-Dimensional Control Problems},
\newblock
Appl.
Math.
 and
 Optim.
  42, 291--313, 2000.


\bibitem{Antsaklis2014}
F. Zhu and P. J. Antsaklis,
\newblock Optimal control of hybrid switched systems: A brief survey,
\newblock {\em Discrete Event Dynamic Systems},  1--20, 2014.

 \bibitem{Zuazua2017}
 E. Zuazua,
 \newblock Large time control and turnpike properties for wave equations,
 \newblock {\em Annual Reviews in Control} 2017.


\end{thebibliography}

\end{document}